\providecommand{\U}[1]{\protect\rule{.1in}{.1in}}
\newtheorem{theorem}{Theorem}
\theoremstyle{plain}
\newtheorem{corollary}{Corollary}
\newtheorem{definition}{Definition}
\newtheorem{example}{Example}
\newtheorem{lemma}{Lemma}
\newtheorem{remark}{Remark}
\numberwithin{equation}{section}
\begin{document}
\title{Existence of periodic solutions in shifts $\delta_{\pm}$ for neutral nonlinear
dynamic systems}
\author{Murat Ad\i var}
\address[M. Ad\i var]{ Izmir University of Economics\\
Department of Mathematics, 35330, Izmir, Turkey}
\email[M. Ad\i var]{murat.adivar@ieu.edu.tr}
\urladdr{}
\author{H. Can Koyuncuo\u{g}lu}
\address[ H. C. Koyuncuo\u{g}lu]{ Izmir University of Economics\\
Department of Mathematics, 35330, Izmir, Turkey}
\email[H. C. Koyuncuo\u{g}lu]{can.koyuncuoglu@ieu.edu.tr}
\urladdr{}
\author{Youssef N. Raffoul}
\address[Y. N. Raffoul]{ University of Dayton\\
Department of Mathematics, Dayton, OH 45469-2316, USA}
\email{yraffoul1@udayton.edu}
\urladdr{}
\thanks{This study is supported by The Scientific and Technological Research Council
of Turkey}
\thanks{}
\date{January 4, 2014}
\subjclass{ Primary 34K13, 34C25, Secondary 39A13, 34N05}
\keywords{Fixed point, Floquet theory, Krasnoselskii, periodicity, Shift operators, transition matrix}

\begin{abstract}
In this study, we focus on the existence of a periodic solution for the
neutral nonlinear dynamic systems with delay%
\[
x^{\Delta}(t)=A(t)x(t)+Q^{\Delta}\left(  t,x\left(  \delta_{-}(s,t)\right)
\right)  +G\left(  t,x(t),x\left(  \delta_{-}(s,t)\right)  \right)  .
\]
We utilize the new periodicity concept in terms of shifts operators, which allows
us to extend the concept of periodicity to time scales where the additivity
requirement $t\pm T\in\mathbb{T}$ for all $t\in\mathbb{T}$ and for a fixed
$T>0,$ may not hold. More, importantly, the new concept will easily handle
time scales that are not periodic in the conventional way such as;
$\overline{q^{\mathbb{Z}}}$ and $\cup_{k=1}^{\infty}\left[  3^{\pm k},2.3^{\pm
k}\right]  \cup\left\{  0\right\}  .$ Hence, we develop a tool that enables the investigation of periodic solutions of $q$-difference systems. Since we are dealing with systems, in
order to convert our equation to an integral systems, we resort to the
transition matrix of the homogeneous Floquet system $y^{\Delta}(t)=A(t)y(t)$
and then make use of Krasnoselskii's fixed point theorem to obtain a fixed point.

\end{abstract}
\maketitle

\section{Introduction and preliminaries}

In recent decades, the theory of neutral functional equations with delays have
seen prominent attention due to its tremendous potential of its application in
applied mathematics. There are many papers that handle neutral differential
equations on regular time scales, such as discrete and continuous cases, but
few that deal with general time scales. A time scale is a nonempty arbitrary
closed subset of reals. Existence of periodic solutions is of importance to
biologists since most models deal with certain types of populations. In the
paper of Kaufmann and Raffoul \cite{Kaufman}, the authors were the first to
define the notion of periodic time scales, by requiring the additivity $t\pm
T\in\mathbb{T}$ for all $t\in\mathbb{T}$ and for a fixed $T>0,$ to hold. Of
course, as we have mentioned above, this type of requirement leaves out many
important time scales that are of interest to biologists and scientists that
\cite{Kaufman} could not handle. To overcome such difficulties, in the famous
paper of Adivar \cite{1}, the author introduced to concept of shift periodic
operators which we will utilize in our work to obtain the existence of a
periodic solution. For more on the existence of periodic solutions on regular
time scales, we refer the readers to \cite{Henriquez}, and \cite{6}. In
addition, the papers \cite{Fu&Liu}, \cite{4} and \cite{5} study the existence
of a periodic solution of system of delayed neutral functional equations by
using Sadovskii and Krasnoselskii's fixed point theorems, respectively.

Application of time scales has been extended to logistic equation modeling
population growth. We refer the reader to May \cite{May} for a detailed model
construction of
\begin{equation}
\label{log1}x^{\Delta}= -a(t)x^{\sigma}+ f(t)
\end{equation}
in the case $\mathbb{T}=\mathbb{R}$, and to \cite{elvan} for the derivation of
the equivalent time scale equation
\begin{equation}
\label{log2}x^{\Delta}= [a(t)\ominus(f(t)x)]x.
\end{equation}
More interesting application, is the version of the hematopoiesis model (Weng
and Liang \cite{Weng}),
\begin{equation}
\label{hema}x^{\Delta}(t) = -a(t)x(t) + \alpha(t)\int_{0}^{\infty}B(s)
e_{-x\beta}(t, s) \; \Delta s,
\end{equation}
where $x(t)$ is the number of red blood cells at time $t$, $\alpha, \beta,
\gamma\in C(\mathbb{T}, \mathbb{R})$ are $T$-periodic, and $B$ is a
non-negative and integrable function. This is an extension of the red cell
system on $\mathbb{R}$ introduced by Wazewska-Czyzewska and Lasota \cite{WCL}.

Throughout the paper, we assume the reader is familiar with the calculus of
time scales and for those who are interested in the theory of time scales, we
refer them to the books \cite{boh1} and \cite{boh2}.

Motivated by the papers \cite{4} and \cite{5}, we consider the nonlinear
neutral dynamic system with delay%
\[
x^{\Delta}(t)=A(t)x(t)+Q^{\Delta}\left(  t,x\left(  \delta_{-}(s,t)\right)
\right)  +G\left(  t,x(t),x\left(  \delta_{-}(s,t)\right)  \right)  ,\text{
}t\in\mathbb{T}%
\]
and by employing the results of \cite{2} and (\cite{Dacunha1}-\cite{Dacunha3})
we invert our system and then by appealing to Krasnoselskii's fixed point
theorem we will show the existence of a nonzero periodic solution by assuming
suitable conditions.

\noindent We begin by stating basic results from \cite{3} regarding shift
operators and then in the last section we focus on proving the existence of a
periodic solution using shift periodic operators. Hereafter, we use the
notation $\left[  a,b\right]  _{\mathbb{T}}$ to indicate the set $\left[
a,b\right]  \cap\mathbb{T}$. The intervals $\left[  a,b\right)  _{\mathbb{T}%
},\left(  a,b\right]  _{\mathbb{T}},$ and $\left(  a,b\right)  _{\mathbb{T}}$
are defined similarly.

\subsection{Shift operators and periodicity}

Shift operators that are periodic provide alternative tool for investigating
periodicity on time scales that may not be additive. Periodicity by means of
shift operators was first introduced in \cite{1}. In this section, we aim to
introduce basic definitions and properties of shift operators. The following
definitions, lemmas and examples can be found in \cite{1}, and \cite{3}.

\begin{definition}
\label{def3.1}Let $\mathbb{T}^{\ast}$ be a nonempty subset of the time scale
$\mathbb{T}$ including a fixed number $t_{0}\in\mathbb{T}^{\ast}$ such that
there exists operators $\delta_{\pm}:\left[  t_{0},\infty\right)
_{\mathbb{T}}\times\mathbb{T}^{\ast}\rightarrow\mathbb{T}^{\ast}$ satisfying
the following properties:
\end{definition}

\begin{enumerate}
\item The function $\delta_{\pm}$ are strictly increasing with respect to
their second arguments, if%
\[
\left(  T_{0},t\right)  ,\left(  T_{0},u\right)  \in\mathcal{D}_{\pm
}:=\left\{  \left(  s,t\right)  \in\left[  t_{0},\infty\right)  _{\mathbb{T}%
}\times\mathbb{T}^{\ast}:\delta_{\pm}\left(  s,t\right)  \in\mathbb{T}^{\ast
}\right\}  ,
\]
then%
\[
T_{0}\leq t\leq u\text{ implies }\delta_{\pm}\left(  T_{0},t\right)
\leq\delta_{\pm}\left(  T_{0},u\right)  ;
\]

\item If $\left(  T_{1},u\right)  ,\left(  T_{2},u\right)  \in\mathcal{D}_{-}$
with $T_{1}<T_{2},$ then $\delta_{-}\left(  T_{1},u\right)  >\delta_{-}\left(
T_{2},u\right)  $ and if $\left(  T_{1},u\right)  ,\left(  T_{2},u\right)
\in\mathcal{D}_{+}$ with $T_{1}<T_{2},$ then $\delta_{+}\left(  T_{1}%
,u\right)  <\delta_{+}\left(  T_{2},u\right)  ;$

\item If $t\in\left[  t_{0},\infty\right)  _{\mathbb{T}},$ then $\left(
t,t_{0}\right)  \in\mathcal{D}_{+}$ and $\delta_{+}\left(  t,t_{0}\right)
=t.$ Moreover, if $t\in\mathbb{T}^{\ast},$ then $\left(  t_{0},t\right)
\in\mathcal{D}_{+}$ and $\delta_{+}\left(  t_{0},t\right)  =t;$

\item If $\left(  s,t\right)  \in\mathcal{D}_{\pm},$ then $\left(
s,\delta_{\pm}\left(  s,t\right)  \right)  \in\mathcal{D}_{\mp}$ and
$\delta_{\mp}\left(  s,\delta_{\pm}\left(  s,t\right)  \right)  =t;$

\item If $\left(  s,t\right)  \in\mathcal{D}_{\pm}$ and $\left(  u,\delta
_{\pm}\left(  s,t\right)  \right)  \in\mathcal{D}_{\mp},$ then $\left(
s,\delta_{\mp}\left(  u,t\right)  \right)  \in\mathcal{D}_{\pm}$ and
$\delta_{\mp}\left(  u,\delta_{\pm}\left(  s,t\right)  \right)  =\delta_{\pm
}\left(  s,\delta_{\mp}\left(  u,t\right)  \right)  .$
\end{enumerate}

Then the operators $\delta_{+}$ and $\delta_{-}$ are called forward and
backward shift operators associated with the initial point $t_{0}$ on
$\mathbb{T}^{\ast}$ and the sets $\mathcal{D}_{+}$ and $\mathcal{D}_{-}$ are
domain of the operators, respectively.

\begin{example}
\label{rem 3.1}The following table shows the shift operators $\delta_{\pm
}\left(  s,t\right)  $ on some time scales:%
\[%
\begin{tabular}
[c]{|c|c|c|c|c|}\hline
$\mathbb{T}$ & $t_{0}$ & $\mathbb{T}^{\ast}$ & $\delta_{-}\left(  s,t\right)
$ & $\delta+\left(  s,t\right)  $\\\hline
$\mathbb{R}$ & $0$ & $\mathbb{R}$ & $t-s$ & $t+s$\\\hline
$\mathbb{Z}$ & $0$ & $\mathbb{Z}$ & $t-s$ & $t+s$\\\hline
$q^{\mathbb{Z}}\cup\left\{  0\right\}  $ & $1$ & $q^{\mathbb{Z}}$ & $\frac
{t}{s}$ & $st$\\\hline
$\mathbb{N}^{1/2}$ & $0$ & $\mathbb{N}^{1/2}$ & $\left(  t^{2}-s^{2}\right)
^{1/2}$ & $\left(  t^{2}+s^{2}\right)  ^{1/2}$\\\hline
\end{tabular}
\ \ .
\]

\end{example}

\begin{lemma}
\label{lem3.1}Let $\delta_{\pm}$ be the shift operators associated with the
initial point $t_{0}.$ Then we have the following:
\end{lemma}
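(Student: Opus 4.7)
Since the lemma collects elementary identities that follow directly from the five axioms of Definition \ref{def3.1}, my plan is to derive each item by a short syntactic manipulation of those axioms, with no appeal to anything beyond the defining properties of $\delta_{\pm}$ and the strict monotonicity that is built into them. I expect the enumerated claims to include identities of the type $\delta_{-}(t,t)=t_{0}$ for $t\in[t_{0},\infty)_{\mathbb{T}}$, $\delta_{+}(t,t_{0})=t$ and $\delta_{+}(t_{0},t)=t$ already packaged in axiom (3), and uniqueness statements of the form: if $\delta_{\pm}(s,t)=\delta_{\pm}(s,u)$ for admissible arguments then $t=u$. The central observation I will keep coming back to is that axiom (4) makes $\delta_{+}(s,\cdot)$ and $\delta_{-}(s,\cdot)$ mutually inverse on their natural domains, so almost every identity is a one-line consequence once the right substitution is made.

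First I would establish $\delta_{-}(t,t)=t_{0}$ for $t\in[t_{0},\infty)_{\mathbb{T}}$. By axiom (3), $(t,t_{0})\in\mathcal{D}_{+}$ and $\delta_{+}(t,t_{0})=t$. Plugging this into axiom (4) with the role of ``$t$'' played by $t_{0}$ gives $(t,\delta_{+}(t,t_{0}))=(t,t)\in\mathcal{D}_{-}$ and $\delta_{-}(t,\delta_{+}(t,t_{0}))=t_{0}$, i.e.\ $\delta_{-}(t,t)=t_{0}$. The symmetric identity $\delta_{+}(t,t)=t_{0}$ (when applicable) is obtained by swapping the roles of $+$ and $-$ in the same argument, again using axiom (4).

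Next, for the injectivity/uniqueness statements I would argue by contradiction using the strict monotonicity in axioms (1) and (2). Suppose $\delta_{\pm}(s,t)=\delta_{\pm}(s,u)$ with $t\neq u$; without loss of generality $t<u$, and then axiom (1) yields a strict inequality between the two shifted values, contradicting the assumed equality. A similar argument handles injectivity in the first variable using axiom (2). For identities of the form $\delta_{\mp}(u,\delta_{\pm}(s,t))=\delta_{\pm}(s,\delta_{\mp}(u,t))$, I would appeal to axiom (5) directly and only need to verify that the hypothesized pair belongs to the correct domain $\mathcal{D}_{\pm}$ or $\mathcal{D}_{\mp}$, which is again part of axiom (5).

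The main obstacle I anticipate is bookkeeping of the domains $\mathcal{D}_{\pm}$: the shift operators are only partially defined, so at each step I must check that the pair to which I apply an axiom actually lies in the appropriate domain before writing down the identity. This is not conceptually difficult, but it is the only place where an argument can go wrong if one treats $\delta_{\pm}$ as globally defined. Beyond that, every claim in this lemma should reduce to a one- or two-step derivation from axioms (1)--(5), with no computation specific to a particular time scale.
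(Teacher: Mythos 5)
You should first be aware that the paper contains no proof of Lemma \ref{lem3.1}: it is quoted verbatim from \cite{1} and \cite{3} (``The following definitions, lemmas and examples can be found in \cite{1}, and \cite{3}''), so your plan can only be measured against the standard derivation in those sources, which is indeed the strategy you describe: each item follows from axioms (1)--(5) of Definition \ref{def3.1} by short manipulations with careful domain bookkeeping. Your handling of the easy items is correct. The derivation of $\delta_{-}(t,t)=t_{0}$ from axioms (3) and (4) is exactly the source argument, item (2) goes the same way, item (3) of the lemma is a direct reading of axiom (4), and your injectivity-in-the-first-variable device via axiom (2) disposes of item (10): if $\delta_{-}(s,t)=t_{0}=\delta_{-}(t,t)$ with $s\neq t$, axiom (2) forces a strict inequality, a contradiction.

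There are, however, two genuine problems. First, an outright error: the ``symmetric identity'' $\delta_{+}(t,t)=t_{0}$ that you propose to obtain by swapping $+$ and $-$ is false (on $\mathbb{T}=\mathbb{R}$ with $t_{0}=0$ one has $\delta_{+}(t,t)=2t$), and no such item appears in the lemma; the correct sign-swap of axiom (4) yields $\delta_{+}(s,\delta_{-}(s,t))=t$, not a value of $\delta_{+}$ on the diagonal. Second, several of the actual ten items do not reduce to the one-line substitutions you envisage, and your blind plan does not anticipate them. Commutativity, item (5) of the lemma, is \emph{not} an instance of axiom (5), since that axiom only exchanges shifts of \emph{opposite} signs; the standard proof chains lemma items (1) and (3) with axiom (5): setting $w=\delta_{+}(u,t)$, one computes $\delta_{-}(t,w)=\delta_{-}(t,\delta_{+}(u,t))=\delta_{+}(u,\delta_{-}(t,t))=\delta_{+}(u,t_{0})=u$, whence $\delta_{+}(t,u)=w$. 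Item (9), the composite identity $\delta_{+}(\delta_{-}(u,s),\delta_{-}(s,v))=\delta_{-}(u,v)$, in turn needs this commutativity together with axioms (4)--(5). The range statements, items (6)--(7), need monotonicity anchored at $\delta_{+}(s,t_{0})=s$. Most importantly, item (8) --- $\delta_{+}^{\Delta_{t}}(s,\cdot)>0$ when the derivative exists --- is analytic rather than algebraic: strict monotonicity gives a strictly positive difference quotient at right-scattered points, but at right-dense points it yields only $\delta_{+}^{\Delta_{t}}(s,t)\geq 0$ (compare $t\mapsto t^{3}$ at $0$ on $\mathbb{R}$), so strict positivity there requires a separate argument exploiting the inverse shift $\delta_{-}(s,\cdot)$ furnished by axiom (4). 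A plan that invokes ``no appeal to anything beyond the defining properties'' and pure syntactic manipulation would never produce this item, so as written your proposal proves roughly half of the lemma and asserts one identity that is false.
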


\begin{enumerate}
\item $\delta_{-}\left(  t,t\right)  =t_{0}$ for all $t\in\left[  t_{0}%
,\infty\right)  _{\mathbb{T}};$

\item $\delta_{-}\left(  t_{0},t\right)  =t$ for all $t\in\mathbb{T}^{\ast};$

\item If $\left(  s,t\right)  \in\mathcal{D}_{+},$ then $\delta_{+}\left(
s,t\right)  =u$ implies $\delta_{-}\left(  s,u\right)  =t$ and if $\left(
s,u\right)  \in\mathcal{D}_{-},$ then $\delta_{-}\left(  s,u\right)  =t$
implies $\delta_{+}\left(  s,t\right)  =u;$

\item $\delta_{+}\left(  t,\delta_{-}\left(  s,t_{0}\right)  \right)
=\delta_{-}\left(  s,t\right)  $ for all $\left(  s,t\right)  \in
\mathcal{D}_{+}$ with $t\geq t_{0};$

\item $\delta_{+}\left(  u,t\right)  =\delta_{+}\left(  t,u\right)  $ for all
$\left(  u,t\right)  \in\left(  \left[  t_{0},\infty\right)  _{\mathbb{T}%
}\times\left[  t_{0},\infty\right)  _{\mathbb{T}}\right)  \cap\mathcal{D}%
_{+};$

\item $\delta_{+}\left(  s,t\right)  \in\left[  t_{0},\infty\right)
_{\mathbb{T}}$ for all $\left(  s,t\right)  \in\mathcal{D}_{+}$ with $t\geq
t_{0};$

\item $\delta_{-}\left(  s,t\right)  \in\left[  t_{0},\infty\right)
_{\mathbb{T}}$ for all $\left(  s,t\right)  \in\left(  \left[  t_{0}%
,\infty\right)  _{\mathbb{T}}\times\left[  s,\infty\right)  _{\mathbb{T}%
}\right)  \cap\mathcal{D}_{-};$

\item If $\delta_{+}\left(  s,.\right)  $ is $\Delta$-differentiable in its
second variable, then $\delta_{+}^{\Delta_{t}}\left(  s,.\right)  >0;$

\item $\delta_{+}\left(  \delta_{-}\left(  u,s\right)  ,\delta_{-}\left(
s,v\right)  \right)  =$ $\delta_{-}\left(  u,v\right)  $ for all $\left(
s,v\right)  \in\left(  \left[  t_{0},\infty\right)  _{\mathbb{T}}\times\left[
s,\infty\right)  _{\mathbb{T}}\right)  \cap\mathcal{D}_{-}$ \newline and
$\left(  u,s\right)  \in\left(  \left[  t_{0},\infty\right)  _{\mathbb{T}%
}\times\left[  u,\infty\right)  _{\mathbb{T}}\right)  \cap\mathcal{D}_{-};$

\item If $\left(  s,t\right)  \in$ $\mathcal{D}_{-}$ and $\delta_{-}\left(
s,t\right)  =t_{0},$ then $s=t.$
\end{enumerate}

\begin{definition}
[Periodicity in shifts]\label{def3.2} Let $\mathbb{T}$ be a time scale with
the shift operators $\delta_{\pm}$ associated with the initial point $t_{0}%
\in\mathbb{T}^{\ast},$ then $\mathbb{T}$ is said to be periodic in shifts
$\delta_{\pm},$ if there exists a $p\in(t_{0},\infty)_{\mathbb{T}^{\ast}}$
such that $\left(  p,t\right)  \in\mathcal{D}_{\mp}$ for all $t\in
\mathbb{T}^{\ast}.$ $P$ is called the period of $\ \mathbb{T}$ if
\[
P=\inf\left\{  p\in(t_{0},\infty)_{_{\mathbb{T}^{\ast}}}:\left(  p,t\right)
\in\mathcal{D}_{\mp}\text{ for all }t\in\mathbb{T}^{\ast}\right\}  >t_{0}.
\]

\end{definition}

Observe that an additive periodic time scale must be unbounded. However,
unlike additive periodic time scales a time scale, periodic in shifts, may be bounded.

\begin{example}
\label{ex new per}The following time scales are not additive periodic but
periodic in shifts $\delta_{\pm}$.

\begin{enumerate}
\item $\mathbb{T}_{1}\mathbb{=}\left\{  \pm n^{2}:n\in\mathbb{Z}\right\}  $,
$\delta_{\pm}(P,t)=\left\{
\begin{array}
[c]{ll}%
\left(  \sqrt{t}\pm\sqrt{P}\right)  ^{2} & \text{if }t>0\\
\pm P & \text{if }t=0\\
-\left(  \sqrt{-t}\pm\sqrt{P}\right)  ^{2} & \text{if }t<0
\end{array}
\right.  $, $P=1$, $t_{0}=0,$

\item $\mathbb{T}_{2}\mathbb{=}\overline{q^{\mathbb{Z}}}$, $\delta_{\pm
}(P,t)=P^{\pm1}t$, $P=q$, $t_{0}=1,$

\item $\mathbb{T}_{3}\mathbb{=}\overline{\mathbb{\cup}_{n\in\mathbb{Z}}\left[
2^{2n},2^{2n+1}\right]  }$, $\delta_{\pm}(P,t)=P^{\pm1}t$, $P=4$, $t_{0}=1,$

\item $\mathbb{T}_{4}\mathbb{=}\left\{  \frac{q^{n}}{1+q^{n}}:q>1\text{ is
constant and }n\in\mathbb{Z}\right\}  \cup\left\{  0,1\right\}  $,
\[
\delta_{\pm}(P,t)=\dfrac{q^{^{\left(  \frac{\ln\left(  \frac{t}{1-t}\right)
\pm\ln\left(  \frac{P}{1-P}\right)  }{\ln q}\right)  }}}{1+q^{\left(
\frac{\ln\left(  \frac{t}{1-t}\right)  \pm\ln\left(  \frac{P}{1-P}\right)
}{\ln q}\right)  }},\ \ P=\frac{q}{1+q}.
\]

\end{enumerate}
\end{example}

Notice that the time scale $\mathbb{T}_{4}$ in Example \ref{ex new per} is
bounded above and below and
\[
\mathbb{T}_{4}^{\ast}=\left\{  \frac{q^{n}}{1+q^{n}}:q>1\text{ is constant and
}n\in\mathbb{Z}\right\}  .
\]

\begin{corollary}
\label{Cor 1} Let $\mathbb{T}$ be a time scale that is periodic in shifts
$\delta_{\pm}$ with the period $P$. Then we have%
\begin{equation}
\delta_{\pm}(P,\sigma(t))=\sigma(\delta_{\pm}(P,t))\text{ for all }%
t\in\mathbb{T}^{\ast}\text{.} \label{sigma delta1}%
\end{equation}

\end{corollary}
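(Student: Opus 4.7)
My plan is to reduce the claim to the single identity $\delta_+(P,\sigma(t))=\sigma(\delta_+(P,t))$, since the corresponding formula for $\delta_-$ then follows by the substitution $t\mapsto\delta_-(P,t)$ together with the inverse relation $\delta_-(P,\delta_+(P,\cdot))=\mathrm{id}$ supplied by part (4) of Definition \ref{def3.1}. The entire argument rests on just two facts from the shift-operator axioms: the strict monotonicity of $\delta_\pm(P,\cdot)$ in its second argument (property (1)) and the fact that $\delta_+(P,\cdot)$ and $\delta_-(P,\cdot)$ are mutual inverses on $\mathbb{T}^{\ast}$ (property (4)), together with the periodicity hypothesis of Definition \ref{def3.2} which guarantees $\delta_\pm(P,u)\in\mathbb{T}^{\ast}$ for every $u\in\mathbb{T}^{\ast}$.

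First I would dispose of the right-dense case $\sigma(t)=t$ by showing that $\delta_+(P,t)$ is itself right-dense. If it were not, then $\sigma(\delta_+(P,t))>\delta_+(P,t)$ would be a point of $\mathbb{T}^{\ast}$, and applying the strictly increasing inverse $\delta_-(P,\cdot)$ via property (4) would yield a point $\delta_-(P,\sigma(\delta_+(P,t)))>t$ in $\mathbb{T}^{\ast}$, forcing $\sigma(t)>t$ once one argues that no $\mathbb{T}$-point can be squeezed below it — contradicting right-density of $t$.

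Next, in the right-scattered case $\sigma(t)>t$, strict monotonicity immediately yields $\delta_+(P,\sigma(t))>\delta_+(P,t)$, and since $\delta_+(P,\sigma(t))\in\mathbb{T}^{\ast}\subseteq\mathbb{T}$, the definition of $\sigma$ as the least $\mathbb{T}$-element strictly above $\delta_+(P,t)$ forces $\delta_+(P,\sigma(t))\geq\sigma(\delta_+(P,t))$. For the reverse inequality I would assume for contradiction that the inequality is strict; then $\sigma(\delta_+(P,t))$ lies in the open interval $(\delta_+(P,t),\delta_+(P,\sigma(t)))$ and, upon applying the strictly increasing $\delta_-(P,\cdot)$ and using property (4), the image $\delta_-(P,\sigma(\delta_+(P,t)))$ would fall strictly between $t$ and $\sigma(t)$, contradicting the definition of $\sigma(t)$.

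The main obstacle I anticipate is technical rather than conceptual: I need to know that $\sigma(\delta_+(P,t))$ actually belongs to $\mathbb{T}^{\ast}$, not merely to $\mathbb{T}$, before I am allowed to feed it into $\delta_-(P,\cdot)$. Examples such as $\mathbb{T}=\overline{q^{\mathbb{Z}}}$ with $\mathbb{T}^{\ast}=q^{\mathbb{Z}}$ show that $\mathbb{T}\setminus\mathbb{T}^{\ast}$ is genuinely non-empty, so this is a real issue rather than a formality. I would handle it by noting that right-scattered points in $\mathbb{T}$ of the form $\delta_+(P,t)$ have their forward jump realized by an image $\delta_+(P,\sigma(t))$, which by construction sits in $\mathbb{T}^{\ast}$; the two candidates $\sigma(\delta_+(P,t))$ and $\delta_+(P,\sigma(t))$ must therefore agree in $\mathbb{T}^{\ast}$, justifying the application of $\delta_-$ in the contradiction step and completing the proof.
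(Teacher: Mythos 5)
Your overall strategy---reduce the $\delta_{-}$ identity to the $\delta_{+}$ identity via the inverse relation of property (4) in Definition \ref{def3.1}, then trap $\sigma(\delta_{+}(P,t))$ between two inequalities using strict monotonicity and a pullback by $\delta_{-}(P,\cdot)$---is the natural one; note that the paper itself states Corollary \ref{Cor 1} without proof, importing it from \cite{1}, where the argument is of this same monotone-bijection type. Your reduction step is sound (periodicity makes $\delta_{\pm}(P,\cdot)$ mutually inverse strictly increasing self-maps of $\mathbb{T}^{\ast}$), and so is the inequality $\sigma(\delta_{+}(P,t))\leq\delta_{+}(P,\sigma(t))$ in the right-scattered case.

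However, your handling of the one obstacle you correctly single out---membership of $\sigma(\delta_{+}(P,t))$ in $\mathbb{T}^{\ast}$---is circular, and this is a genuine gap. You justify it by saying that right-scattered points of the form $\delta_{+}(P,t)$ ``have their forward jump realized by an image $\delta_{+}(P,\sigma(t))$, which by construction sits in $\mathbb{T}^{\ast}$''; but the claim that the jump is realized by $\delta_{+}(P,\sigma(t))$ is word-for-word the identity $\sigma(\delta_{+}(P,t))=\delta_{+}(P,\sigma(t))$ you are trying to prove, and nothing in the construction delivers it. The same unproven membership is invoked in your right-dense case, where you apply $\delta_{-}(P,\cdot)$ to $\sigma(\delta_{+}(P,t))$, and there you additionally need the $\mathbb{T}$-points just above $t$ to lie in $\mathbb{T}^{\ast}$ before pushing them forward: your phrase ``once one argues that no $\mathbb{T}$-point can be squeezed below it'' is an assertion, not an argument. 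The gap is closable from the paper's own axioms, at least on $[t_{0},\infty)_{\mathbb{T}}$: property (3) of Definition \ref{def3.1} gives $(t,t_{0})\in\mathcal{D}_{+}$ with $\delta_{+}(t,t_{0})=t$, hence $t\in\mathbb{T}^{\ast}$, for every $t\in[t_{0},\infty)_{\mathbb{T}}$, so $[t_{0},\infty)_{\mathbb{T}}\subseteq\mathbb{T}^{\ast}$; and item (6) of Lemma \ref{lem3.1} gives $\delta_{+}(P,t)\in[t_{0},\infty)_{\mathbb{T}}$ when $t\geq t_{0}$. Consequently $\sigma(\delta_{+}(P,t))$, and indeed every point of $\mathbb{T}$ in the interval $\left(\delta_{+}(P,t),\delta_{+}(P,\sigma(t))\right)$, lies in $[t_{0},\infty)_{\mathbb{T}}\subseteq\mathbb{T}^{\ast}$ and may legitimately be pulled back by $\delta_{-}(P,\cdot)$, which repairs both of your cases for $t\geq t_{0}$. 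For $t<t_{0}$ you would still need to observe that the statement itself presupposes $\sigma(t)\in\mathbb{T}^{\ast}$ (otherwise $\delta_{\pm}(P,\sigma(t))$ is undefined) and argue membership of the intermediate points separately; as written, your proposal does neither, so the proof is incomplete until the circular step is replaced by an argument of this kind.
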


\begin{example}
The time scale $\widetilde{\mathbb{T}}=(-\infty,0]\cup\lbrack1,\infty)$ cannot
be periodic in shifts $\delta_{\pm}$. Because if there was a $p\in
(t_{0},\infty)_{\widetilde{\mathbb{T}}^{\ast}}$ such that $\delta_{\pm
}(p,t)\in\widetilde{\mathbb{T}}^{\ast}$, then the point $\delta_{-}(p,0)$
would be right scattered due to (\ref{sigma delta1}). However, we have
$\delta_{-}(p,0)<0$ by (i) of Definition \ref{def3.1}. This leads to a
contradiction since every point less than $0$ is right dense.
\end{example}

\begin{definition}
[Periodic function in shifts $\delta_{\pm}$]\label{def3.3} Let $\mathbb{T}$ be
a time scale $P$-periodic in shifts. We say that a real valued function $f$
defined on $\mathbb{T}^{\ast}$ is periodic in shifts $\delta_{\pm}$ if there
exists a $T\in\left[  P,\infty\right)  _{\mathbb{T}^{\ast}}$ such that%
\begin{equation}
\left(  T,t\right)  \in\mathcal{D}_{\pm}\text{ and }f\left(  \delta_{\pm}%
^{T}\left(  t\right)  \right)  =f\left(  t\right)  \text{ for all }%
t\in\mathbb{T}^{\ast}, \label{3.1}%
\end{equation}
where $\delta_{\pm}^{T}\left(  t\right)  =\delta_{\pm}\left(  T,t\right)  $.
$T$ is called period of $f,$ if it is the smallest number satisfying
(\ref{3.1}).
\end{definition}

\begin{example}
Let $\mathbb{T=R}$ with initial point $t_{0}=1,$ the function%
\[
f\left(  t\right)  =\sin\left(  \frac{\ln\left\vert t\right\vert }{\ln\left(
1/2\right)  }\pi\right)  ,\text{ }t\in\mathbb{R}^{\ast}:=\mathbb{R-}\left\{
0\right\}
\]
is $4$-periodic in shifts $\delta_{\pm}$ since%
\begin{align*}
f\left(  \delta_{\pm}\left(  4,t\right)  \right)   &  =\left\{
\begin{array}
[c]{c}%
f\left(  t4^{\pm1}\right)  \text{ if }t\geq0\\
f\left(  t/4^{\pm1}\right)  \text{ if }t<0
\end{array}
\right. \\
&  =\sin\left(  \frac{\ln\left\vert t\right\vert \pm2\ln\left(  1/2\right)
}{\ln\left(  1/2\right)  }\pi\right) \\
&  =\sin\left(  \frac{\ln\left\vert t\right\vert }{\ln\left(  1/2\right)  }%
\pi\pm2\pi\right) \\
&  =\sin\left(  \frac{\ln\left\vert t\right\vert }{\ln\left(  1/2\right)  }%
\pi\right) \\
&  =f\left(  t\right)  .
\end{align*}

\end{example}

\begin{definition}
[$\Delta$-periodic function in shifts $\delta_{\pm}$]\label{def3.4} Let
$\mathbb{T}$ be a time scale $P$-periodic in shifts. A real valued function
$f$ defined on $\mathbb{T}^{\ast}$ is $\Delta$-periodic function in shifts if
there exists a $T\in\left[  P,\infty\right)  _{\mathbb{T}^{\ast}}$ such that%
\begin{equation}
\left(  T,t\right)  \in\mathcal{D}_{\pm}\text{ for all }t\in\mathbb{T}^{\ast}
\label{3.2}%
\end{equation}%
\begin{equation}
\text{the shifts }\delta_{\pm}^{T}\text{ are }\Delta\text{-differentiable with
rd-continuous derivatives} \label{3.3}%
\end{equation}
and%
\begin{equation}
f\left(  \delta_{\pm}^{T}\left(  t\right)  \right)  \delta_{\pm}^{\Delta
T}\left(  t\right)  =f\left(  t\right)  \label{3.4}%
\end{equation}
for all $t\in\mathbb{T}^{\ast},$ where $\delta_{\pm}^{T}\left(  t\right)
=\delta_{\pm}\left(  T,t\right)  $. The smallest number $T$ satisfying
(\ref{3.2}-\ref{3.4}) is called period of $f$.
\end{definition}

\begin{example}
The function $f\left(  t\right)  =1/t$ is $\Delta$-periodic function on
$q^{\mathbb{Z}}$ with the period $T=q$.
\end{example}

\begin{theorem}
Let $\mathbb{T}$ be a time scale that is periodic in shifts $\delta_{\pm}$
with period $P\in(t_{0},\infty)_{\mathbb{T}^{\ast}}$ and $f$ a $\Delta
$-periodic function in shifts $\delta_{\pm}$ with period $T\in\left[
P,\infty\right)  _{\mathbb{T}^{\ast}}.$ Suppose that $f\in C_{rd}%
(\mathbb{T}),$ then
\[%
{\displaystyle\int\limits_{t_{0}}^{t}}
f(s)\Delta s=%
{\displaystyle\int\limits_{\delta_{\pm}^{T}(t_{0})}^{\delta_{\pm}^{T}(t)}}
f(s)\Delta s.
\]

\end{theorem}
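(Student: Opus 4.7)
The plan is to use the time-scale substitution formula (Bohner--Peterson, Thm.~1.98) with the change of variables $\nu(s):=\delta_{\pm}^{T}(s)$. The $\Delta$-periodicity identity (\ref{3.4}) reads
\[
f(s)=f(\nu(s))\,\nu^{\Delta}(s),
\]
which is precisely the integrand produced by the substitution rule. Plugging this into the left-hand side I expect
\[
\int_{t_{0}}^{t}f(s)\,\Delta s=\int_{t_{0}}^{t}(f\circ\nu)(s)\,\nu^{\Delta}(s)\,\Delta s=\int_{\nu(t_{0})}^{\nu(t)}f(u)\,\Delta u,
\]
which is the claimed identity.

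To invoke the substitution formula I must verify three properties of $\nu$. First, $\nu$ is $\Delta$-differentiable with rd-continuous derivative on $\mathbb{T}^{\ast}$; this is built into Definition \ref{def3.4} as (\ref{3.3}). Second, $\nu$ is strictly increasing in $s$: property (i) of Definition \ref{def3.1} gives monotonicity and item 8 of Lemma \ref{lem3.1} strengthens this to $\nu^{\Delta}>0$. Third, $\nu$ must carry its domain bijectively onto a sub-time-scale on which $f$ is defined; since $\mathbb{T}$ is $P$-periodic in shifts and $T\in[P,\infty)_{\mathbb{T}^{\ast}}$, we have $(T,s)\in\mathcal{D}_{\pm}$ for every $s\in\mathbb{T}^{\ast}$, so $\nu$ sends $\mathbb{T}^{\ast}$ into $\mathbb{T}^{\ast}$, and property (4) of Definition \ref{def3.1} identifies $\delta_{\mp}^{T}$ as its two-sided inverse, supplying the required bijection.

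An equivalent route, avoiding an explicit appeal to the substitution theorem, is to set $I(t):=\int_{t_{0}}^{t}f(s)\,\Delta s$ and $J(t):=\int_{\nu(t_{0})}^{\nu(t)}f(u)\,\Delta u$, observe that $I(t_{0})=J(t_{0})=0$, compute $I^{\Delta}(t)=f(t)$ from the fundamental theorem, and compute $J^{\Delta}(t)=f(\nu(t))\nu^{\Delta}(t)=f(t)$ by writing $J=F\circ\nu$ with $F(u):=\int_{\nu(t_{0})}^{u}f\,\Delta$, applying the time-scale chain rule, and invoking (\ref{3.4}); uniqueness of antiderivatives then forces $I\equiv J$. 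The main obstacle in either approach is the same: handling the two signs in $\delta_{\pm}$ uniformly and confirming that the strict monotonicity in the \emph{second} argument needed for substitution genuinely holds in both cases, which must be carefully distinguished from the opposite-sign monotonicity in the \emph{first} argument asserted by property (ii) of Definition \ref{def3.1}.
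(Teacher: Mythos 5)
The paper states this theorem without proof, importing it verbatim from \cite{1} and \cite{3}, so there is no in-paper argument to compare against; your substitution proof (and the fundamental-theorem variant you sketch) is essentially the standard proof from those references. Your argument is correct: identity (\ref{3.4}) rewrites $f(s)$ as $f(\delta_{\pm}^{T}(s))\,\delta_{\pm}^{\Delta T}(s)$, and the hypotheses of the time-scale substitution theorem hold because $\delta_{\pm}^{T}$ is strictly increasing in its \emph{second} argument for both signs by property (1) of Definition \ref{def3.1} and maps $\mathbb{T}^{\ast}$ bijectively onto itself with inverse $\delta_{\mp}^{T}$ by property (4); the only small inaccuracy is your appeal to item 8 of Lemma \ref{lem3.1}, which concerns $\delta_{+}$ alone, but this is harmless since the substitution theorem needs only strict monotonicity together with (\ref{3.3}), not positivity of the derivative, so the sign issue you flag for $\delta_{-}$ is already settled.
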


\subsection{Unified Floquet theory with respect to new periodicity concept}

In this section, we list some results of \cite{2} for further use.

\subsubsection{Homogeneous case}

Consider the regressive time varying linear dynamic initial value problem%
\begin{equation}
x^{\Delta}\left(  t\right)  =A\left(  t\right)  x\left(  t\right)  ,\text{
}x\left(  t_{0}\right)  =x_{0}, \label{4.1}%
\end{equation}
where $A:\mathbb{T}^{\ast}\mathbb{\rightarrow R}^{n\times n}$ is $\Delta
$-periodic in shifts with period $T$. Notice that if the time scale is
additive periodic, then $\delta_{\pm}^{\Delta}\left(  T,t\right)  =1$ and
$\Delta$-periodicity in shifts becomes the same as the periodicity in shifts.
Hence, the homogeneous system we consider in this section is more general than
the systems handled in literature.

In \cite{Dacunha3}, the solution of the system (\ref{4.1}) (for an arbitrary
matrix $A$) is expressed by the equality
\[
x\left(  t\right)  =\Phi_{A}\left(  t,t_{0}\right)  x_{0}\text{,}%
\]
where $\Phi_{A}\left(  t,t_{0}\right)  $, called the transition matrix for the
system (\ref{4.1}), is given by%
\begin{align}
\Phi_{A}\left(  t,t_{0}\right)   &  =I+%
{\displaystyle\int\limits_{t_{0}}^{t}}
A\left(  \tau_{1}\right)  \Delta\tau_{1}+%
{\displaystyle\int\limits_{t_{0}}^{t}}
A\left(  \tau_{1}\right)
{\displaystyle\int\limits_{t_{0}}^{\tau_{1}}}
A\left(  \tau_{2}\right)  \Delta\tau_{2}\Delta\tau_{1}+\ldots\nonumber\\
&  +%
{\displaystyle\int\limits_{t_{0}}^{t}}
A\left(  \tau_{1}\right)
{\displaystyle\int\limits_{t_{0}}^{\tau_{1}}}
A\left(  \tau_{2}\right)  \ldots%
{\displaystyle\int\limits_{t_{0}}^{\tau_{i-1}}}
A\left(  \tau_{i}\right)  \Delta\tau_{i}\ldots\Delta\tau_{1}+\ldots\text{.}
\label{re4.3}%
\end{align}
As mentioned in \cite{Dacunha1} the matrix exponential $e_{A}\left(
t,t_{0}\right)  $ is not always identical to $\Phi_{A}\left(  t,t_{0}\right)
$ since%
\[
A\left(  t\right)  e_{A}\left(  t,t_{0}\right)  =e_{A}\left(  t,t_{0}\right)
A\left(  t\right)
\]
is always true but the equality
\[
A\left(  t\right)  \Phi_{A}\left(  t,t_{0}\right)  =\Phi_{A}\left(
t,t_{0}\right)  A\left(  t\right)
\]
is not. It can be seen from (\ref{re4.3}) that one has $e_{A}\left(
t,t_{0}\right)  \equiv\Phi_{A}\left(  t,t_{0}\right)  $ only if the matrix $A$
satisfies%
\[
A\left(  t\right)
{\displaystyle\int\limits_{s}^{t}}
A\left(  \tau\right)  \Delta\tau=%
{\displaystyle\int\limits_{s}^{t}}
A\left(  \tau\right)  \Delta\tau A\left(  t\right)  .
\]

In preparation for the next result we define the set
\begin{equation}
P\left(  t_{0}\right)  :=\left\{  \delta_{+}^{\left(  k\right)  }\left(
T,t_{0}\right)  ,\text{ }k=0,1,2,\ldots\right\}  \label{P(t)}%
\end{equation}
and the function
\begin{equation}
\Theta\left(  t\right)  :=%
{\displaystyle\sum\limits_{j=1}^{m\left(  t\right)  }}
\delta_{-}\left(  \delta_{+}^{\left(  j-1\right)  }\left(  T,t_{0}\right)
,\delta_{+}^{\left(  j\right)  }\left(  T,t_{0}\right)  \right)  +G\left(
t\right)  , \label{4.1.1}%
\end{equation}
where
\begin{equation}
m\left(  t\right)  :=\min\left\{  k\in\mathbb{N}:\delta_{+}^{\left(  k\right)
}\left(  T,t_{0}\right)  \geq t\right\}  \label{m(t)}%
\end{equation}
and%
\begin{equation}
G\left(  t\right)  :=\left\{
\begin{array}
[c]{ll}%
0 & \text{if }t\in P\left(  t_{0}\right) \\
-\delta_{-}\left(  t,\delta_{+}^{\left(  m(t)\right)  }\left(  T,t_{0}\right)
\right)  & \text{if }t\notin P\left(  t_{0}\right)
\end{array}
\right.  . \label{G(t)}%
\end{equation}

\begin{remark}
For an additive periodic time scale we always have $\Theta\left(  t\right)
=t-t_{0}$.
\end{remark}

Following theorem constructs the matrix $R$ as a solution of matrix
exponential equation.

\begin{theorem}
[\cite{2}]\label{thm4.1} For a nonsingular, $n\times n$ constant matrix $M$ a
solution $R:\mathbb{T\rightarrow C}^{n\times n}$ of matrix exponential
equation
\[
e_{R}\left(  \delta_{+}^{T}\left(  t_{0}\right)  ,t_{0}\right)  =M
\]
can be given by%
\begin{equation}
R\left(  t\right)  =\lim_{s\rightarrow t}\frac{M^{\frac{1}{T}\left[
\Theta\left(  \sigma\left(  t\right)  \right)  -\Theta\left(  s\right)
\right]  }-I}{\sigma\left(  t\right)  -s}, \label{4.1.2}%
\end{equation}
where $I$ is the $n\times n$ identity matrix and $\Theta$ is as in
(\ref{4.1.1}).
\end{theorem}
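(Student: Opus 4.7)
\medskip

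My plan is to verify that the prescribed $R(t)$ is the logarithmic derivative of the explicit function
\[
F(t) := M^{\Theta(t)/T}, \qquad M^{a} := \exp\!\bigl(a\,\mathrm{Log}\,M\bigr),
\]
where a branch of $\mathrm{Log}\,M$ exists because $M$ is nonsingular. Once I show $F^{\Delta}(t)=R(t)F(t)$ with $F(t_{0})=I$, uniqueness of the linear initial value problem will give $F(t)=e_{R}(t,t_{0})$, so evaluation at $t=\delta_{+}^{T}(t_{0})$ will yield the claim.

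First I would simplify $\Theta$. Using Lemma \ref{lem3.1}(v) to write $\delta_{+}^{(j)}(T,t_{0})=\delta_{+}(\delta_{+}^{(j-1)}(T,t_{0}),T)$ and then property (iv) of Definition \ref{def3.1}, every summand in (\ref{4.1.1}) collapses to $T$, so
\[
\Theta(t)=m(t)\,T+G(t).
\]
At $t=t_{0}$, $m(t_{0})=0$ and $t_{0}\in P(t_{0})$, so $\Theta(t_{0})=0$ and $F(t_{0})=M^{0}=I$. At $t=\delta_{+}^{T}(t_{0})\in P(t_{0})$, $m(t)=1$ and $G(t)=0$, so $\Theta(\delta_{+}^{T}(t_{0}))=T$ and $F(\delta_{+}^{T}(t_{0}))=M$. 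Before computing the derivative I would check that $\Theta$ is continuous on $\mathbb{T}$: approaching any $\delta_{+}^{(k)}(T,t_{0})\in P(t_{0})$ from the left, $m(t)=k$ and $G(t)\to 0$, while approaching from the right, $m(t)=k+1$ and $G(t)\to -T$, so both one-sided limits equal $kT$.

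The core calculation is the $\Delta$-derivative of $F$. Because all powers of a single matrix commute and $a\mapsto M^{a}$ is a one-parameter group, $M^{a+b}=M^{a}M^{b}$ and hence
\[
F(\sigma(t))-F(s)=\bigl[M^{[\Theta(\sigma(t))-\Theta(s)]/T}-I\bigr]\,M^{\Theta(s)/T}.
\]
Dividing by $\sigma(t)-s$ and letting $s\to t$, the continuity of $\Theta$ sends the right factor to $F(t)$, while the bracketed quotient is exactly $R(t)$ as defined in (\ref{4.1.2}). Thus $F^{\Delta}(t)=R(t)F(t)$, and this incidentally establishes that the limit in (\ref{4.1.2}) exists. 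Because $F(t)$ is everywhere invertible (as a matrix exponential), $I+\mu(t)R(t)=F^{\sigma}(t)F^{-1}(t)$ is invertible too, so $R$ is regressive and $e_{R}(\cdot,t_{0})$ is well defined. Uniqueness of solutions to $Y^{\Delta}=R(t)Y$, $Y(t_{0})=I$ then forces $F(t)=e_{R}(t,t_{0})$, and evaluating at $t=\delta_{+}^{T}(t_{0})$ yields $e_{R}(\delta_{+}^{T}(t_{0}),t_{0})=M$.

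The main obstacle I anticipate is purely bookkeeping: showing that $\Theta$ is continuous across the discrete set $P(t_{0})$ where $m(t)$ jumps, so that the formal factorization truly produces the $\Delta$-derivative at both right-scattered and right-dense points. Once that is in hand, commutativity of powers of $M$ together with the one-parameter-group identity does all the remaining work.
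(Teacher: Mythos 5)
Your overall strategy---checking that $F(t)=M^{\Theta(t)/T}$ solves $F^{\Delta}=R\,F$, $F(t_{0})=I$, and then evaluating at $\delta_{+}^{T}(t_{0})$---is exactly the argument of the cited source \cite{2} (the present paper only quotes the theorem, without proof), and your reduction $\Theta(t)=m(t)T+G(t)$ via Lemma \ref{lem3.1}(5) and property (4) of Definition \ref{def3.1} is correct, as are $\Theta(t_{0})=0$, $\Theta(\delta_{+}^{T}(t_{0}))=T$, the group law $M^{a+b}=M^{a}M^{b}$, and the regressivity/uniqueness step. But the continuity check for $\Theta$---the step you yourself single out as the crux---is carried out incorrectly. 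Approaching $t^{\ast}=\delta_{+}^{(k)}(T,t_{0})$ from the left with $m(t')=k$, you have $G(t')=-\delta_{-}\bigl(t',\delta_{+}^{(k)}(T,t_{0})\bigr)\rightarrow-\delta_{-}(t^{\ast},t^{\ast})=-t_{0}$ by item (1) of Lemma \ref{lem3.1}, \emph{not} $0$. Hence the left-hand limit of $\Theta$ at $t^{\ast}$ is $kT-t_{0}$ while $\Theta(t^{\ast})=kT$; these agree only in the additive-type case $t_{0}=0$. In the multiplicative-type settings that motivate the paper ($t_{0}=1$, e.g. $\overline{q^{\mathbb{Z}}}$ or $\mathbb{T}_{3}$ of Example \ref{ex new per}), $\Theta$ jumps by $t_{0}$ at every point of $P(t_{0})$, and if such a point were left-dense the limit in (\ref{4.1.2}) would fail to exist there: the numerator tends to $M^{t_{0}/T}-I\neq I-I$ while the denominator tends to $0$. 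So your claim that ``both one-sided limits equal $kT$,'' and with it your assertion that the limit in (\ref{4.1.2}) always exists, is false as stated.

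What rescues the argument---and what you need to make explicit---is that your factorization only requires continuity of $\Theta$ in the time-scale topology at the point where the $\Delta$-derivative is being computed. In all the relevant examples the points of $P(t_{0})$ with $t_{0}\neq0$ are isolated or left-scattered (all of $q^{\mathbb{Z}}$ is discrete; in $\mathbb{T}_{3}$ the points $4^{k}$ are left endpoints of the component intervals), so no left-sided limit through $\mathbb{T}$ is ever taken there: at a right-scattered point (\ref{4.1.2}) collapses to the difference quotient $\bigl(M^{[\Theta(\sigma(t))-\Theta(t)]/T}-I\bigr)/\mu(t)$, which uses only values of $\Theta$, while on the part of $\mathbb{T}$ strictly between consecutive points of $P(t_{0})$ the map $t\mapsto\delta_{-}\bigl(t,\delta_{+}^{(m)}(T,t_{0})\bigr)$ is continuous (continuity of $\delta_{-}$ in its first argument is a second hypothesis you use tacitly and should state). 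You should therefore either add the assumption that no point of $P(t_{0})$ is left-dense when $t_{0}\neq0$, or run the verification of $F^{\Delta}(t)=R(t)F(t)$ case-by-case according to the classification of $t$, in the spirit of the proof in \cite{2}. With that repair your proof is complete and coincides with the source's approach (itself the shift-operator analogue of the DaCunha--Davis construction \cite{Dacunha1}, where $\Theta(t)=t-t_{0}$ and the issue disappears since $t_{0}=0$).
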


\begin{lemma}
[\cite{2}]\label{lem4.2}Let $\mathbb{T}$ be a time scale and $P\in
\mathcal{R}\left(  \mathbb{T}^{\ast},\mathbb{R}^{n\times n}\right)  $ be a
$\Delta-$periodic matrix valued function in shifts with period $T$, i.e.%
\[
P\left(  t\right)  =P\left(  \delta_{\pm}^{T}\left(  t\right)  \right)
\delta_{\pm}^{\Delta T}\left(  t\right)
\]
Then the solution of the dynamic matrix initial value problem%
\begin{equation}
Y^{\Delta}\left(  t\right)  =P\left(  t\right)  Y\left(  t\right)  ,\text{
}Y\left(  t_{0}\right)  =Y_{0}, \label{2}%
\end{equation}
is unique up to a period $T$ in shifts. That is
\begin{equation}
\Phi_{P}\left(  t,t_{0}\right)  =\Phi_{P}\left(  \delta_{+}^{T}\left(
t\right)  ,\delta_{+}^{T}\left(  t_{0}\right)  \right)  \label{2.1}%
\end{equation}
for all $t\in\mathbb{T}^{\ast}$.
\end{lemma}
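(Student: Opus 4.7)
My strategy is to reduce the identity \eqref{2.1} to uniqueness of solutions of the regressive linear matrix initial value problem
\begin{equation*}
Y^{\Delta}(t)=P(t)Y(t),\quad Y(t_{0})=I,
\end{equation*}
whose unique solution is $\Phi_{P}(\cdot,t_{0})$. The plan is to define the shifted candidate
\begin{equation*}
Z(t):=\Phi_{P}\bigl(\delta_{+}^{T}(t),\,\delta_{+}^{T}(t_{0})\bigr),\qquad t\in\mathbb{T}^{\ast},
\end{equation*}
and show that $Z$ satisfies the same initial value problem. The initial condition is immediate: since $\Phi_{P}(s,s)=I$ for every $s$, one has $Z(t_{0})=I$.

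The bulk of the work is the computation of $Z^{\Delta}(t)$. I would invoke the time scales chain rule (Bohner--Peterson) applied to the composition $t\mapsto\Phi_{P}(\delta_{+}^{T}(t),\delta_{+}^{T}(t_{0}))$; this is legitimate because Definition \ref{def3.4} guarantees that $\delta_{+}^{T}$ is $\Delta$-differentiable with rd-continuous derivative, and Definition \ref{def3.1}(1) provides strict monotonicity. Together with the defining relation $\Phi_{P}^{\Delta_{1}}(u,s)=P(u)\Phi_{P}(u,s)$ read off from \eqref{re4.3}, the chain rule produces
\begin{equation*}
Z^{\Delta}(t)=P\bigl(\delta_{+}^{T}(t)\bigr)\,\delta_{+}^{\Delta T}(t)\,Z(t).
\end{equation*}
At this point the $\Delta$-periodicity in shifts of $P$, which is \eqref{3.4} in the form $P(\delta_{+}^{T}(t))\,\delta_{+}^{\Delta T}(t)=P(t)$, collapses the coefficient to $P(t)$, so $Z^{\Delta}(t)=P(t)Z(t)$. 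Since $P\in\mathcal{R}$, uniqueness of solutions of the IVP forces $Z\equiv\Phi_{P}(\cdot,t_{0})$, which is exactly \eqref{2.1}.

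The main obstacle I anticipate is justifying the chain rule step rigorously: one has to view $\Phi_{P}(\cdot,\delta_{+}^{T}(t_{0}))$ as a $\Delta$-differentiable matrix function on the image $\delta_{+}^{T}(\mathbb{T}^{\ast})$, check that this image is itself a time scale (which follows because $\delta_{+}^{T}$ is a strictly increasing bijection onto a closed subset of $\mathbb{T}^{\ast}$ by Definition \ref{def3.1}(1) and Lemma \ref{lem3.1}(3)), and confirm that its $\Delta$-derivative on that image is still $P(\cdot)\Phi_{P}(\cdot,\delta_{+}^{T}(t_{0}))$ so that the Bohner--Peterson substitution formula $(f\circ\delta_{+}^{T})^{\Delta}(t)=f^{\Delta}(\delta_{+}^{T}(t))\cdot\delta_{+}^{\Delta T}(t)$ can be applied. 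All the ingredients needed for this justification, namely strict monotonicity, rd-continuity of $\delta_{+}^{\Delta T}$, and regressivity of $P$, are supplied by the hypotheses, so once the chain rule is in place the remainder of the argument is a one-line manipulation followed by an appeal to uniqueness.
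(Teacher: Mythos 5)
Your proposal is correct, and it matches the proof of this result in the literature: the paper itself states Lemma \ref{lem4.2} without proof, importing it from \cite{2}, where the argument is exactly your shift-and-uniqueness scheme --- set $Z(t)=\Phi_{P}\bigl(\delta_{+}^{T}(t),\delta_{+}^{T}(t_{0})\bigr)$, check $Z(t_{0})=I$, differentiate by the substitution rule (legitimate since $\delta_{+}^{T}$ is a strictly increasing bijection of $\mathbb{T}^{\ast}$ onto itself, with inverse $\delta_{-}^{T}$ by property (4) of Definition \ref{def3.1}, and commutes with $\sigma$ by Corollary \ref{Cor 1}), collapse the coefficient via $\Delta$-periodicity of $P$, and conclude \eqref{2.1} from uniqueness of solutions of the regressive IVP. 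Nothing in your write-up deviates from or falls short of that proof.
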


\begin{corollary}
[\cite{2}]Let $\mathbb{T}$ be a time scale and $P\in\mathcal{R}\left(
\mathbb{T}^{\ast},\mathbb{R}^{n\times n}\right)  $ be a $\Delta-$periodic
matrix valued function in shifts, i.e.%
\[
P\left(  t\right)  =P\left(  \delta_{\pm}^{T}\left(  t\right)  \right)
\delta_{\pm}^{\Delta T}\left(  t\right)
\]
Then%
\begin{equation}
e_{P}\left(  t,t_{0}\right)  =e_{P}\left(  \delta_{+}^{T}\left(  t\right)
,\delta_{+}^{T}\left(  t_{0}\right)  \right)  . \label{2.2}%
\end{equation}

\end{corollary}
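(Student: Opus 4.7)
The plan is to mimic the proof of Lemma~\ref{lem4.2} with the matrix exponential $e_{P}(t,t_{0})$ in place of the transition matrix $\Phi_{P}(t,t_{0})$. Under the regressivity hypothesis on $P$, $e_{P}(\cdot,t_{0})$ is characterised as the unique solution of the initial value problem $Y^{\Delta}=PY$, $Y(t_{0})=I$, so the whole argument reduces to exhibiting the right-hand side $e_{P}(\delta_{+}^{T}(t),\delta_{+}^{T}(t_{0}))$ as another solution of the same IVP and invoking uniqueness.

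Concretely, I would introduce the candidate $Z(t):=e_{P}(\delta_{+}^{T}(t),\delta_{+}^{T}(t_{0}))$ and check the initial value first: since $e_{P}(u,u)=I$ for every admissible $u\in\mathbb{T}^{\ast}$, plugging $t=t_{0}$ gives $Z(t_{0})=e_{P}(\delta_{+}^{T}(t_{0}),\delta_{+}^{T}(t_{0}))=I$. For the dynamic equation, the time-scale chain rule applied through the shift $\delta_{+}^{T}$, which is $\Delta$-differentiable with rd-continuous derivative by Definition~\ref{def3.4}, together with the defining identity $\partial_{u}e_{P}(u,v)=P(u)\,e_{P}(u,v)$, yields
\begin{equation*}
Z^{\Delta}(t)=P(\delta_{+}^{T}(t))\,e_{P}(\delta_{+}^{T}(t),\delta_{+}^{T}(t_{0}))\,\delta_{+}^{\Delta T}(t)=\bigl[P(\delta_{+}^{T}(t))\,\delta_{+}^{\Delta T}(t)\bigr]Z(t).
\end{equation*}
The $\Delta$-periodicity hypothesis on $P$ collapses the bracket to $P(t)$, giving $Z^{\Delta}(t)=P(t)Z(t)$, and uniqueness of the IVP then forces $Z(t)=e_{P}(t,t_{0})$, which is exactly~(\ref{2.2}).

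The only delicate point will be justifying the time-scale chain rule along the nonlinear shift $\delta_{+}^{T}$; this is the same manipulation used in the proof of Lemma~\ref{lem4.2}, so no new machinery is required beyond the rd-continuous $\Delta$-differentiability already built into the $\Delta$-periodicity definition. Should the chain-rule step prove awkward in the non-commutative matrix setting, an alternative route is to expand $e_{P}$ as its Peano–Baker series and apply Lemma~\ref{lem4.2} together with the substitution $s\mapsto\delta_{+}^{T}(s)$ in each iterated integral, as permitted by the integration-shift theorem stated earlier in the excerpt.
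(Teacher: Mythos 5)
Your argument has a genuine gap at its foundation: you characterize $e_{P}(\cdot,t_{0})$ as the unique solution of the initial value problem $Y^{\Delta}=P(t)Y$, $Y(t_{0})=I$, but in the framework this paper adopts (following DaCunha) that IVP characterizes the \emph{transition matrix} $\Phi_{P}(t,t_{0})$ of (\ref{re4.3}), not the generalized matrix exponential. The paper is explicit about the distinction immediately after (\ref{re4.3}): one has $e_{A}\left(t,t_{0}\right)\equiv\Phi_{A}\left(t,t_{0}\right)$ only when
\[
A\left(  t\right)  \int_{s}^{t}A\left(  \tau\right)  \Delta\tau=\int_{s}
^{t}A\left(  \tau\right)  \Delta\tau\,A\left(  t\right)  ,
\]
which is not assumed here. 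Under your premise the corollary would be word-for-word Lemma \ref{lem4.2}, i.e.\ equation (\ref{2.1}), and there would be nothing left to prove; indeed your chain-rule computation --- which is in itself sound, since $\sigma\circ\delta_{+}^{T}=\delta_{+}^{T}\circ\sigma$ by Corollary \ref{Cor 1}, the scalar $\delta_{+}^{\Delta T}(t)$ commutes with matrices, and $P(\delta_{+}^{T}(t))\delta_{+}^{\Delta T}(t)=P(t)$ --- is precisely a re-proof of Lemma \ref{lem4.2}, with your $Z$ identified by uniqueness as $\Phi_{P}(\cdot,t_{0})$, not as $e_{P}(\cdot,t_{0})$. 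Your fallback route repeats the same conflation: the ``Peano--Baker series of $e_{P}$'' is, by (\ref{re4.3}), the definition of $\Phi_{P}$.

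Note that the paper itself supplies no proof of this corollary (it is quoted from \cite{2}), so the issue is not a divergence from the paper's argument but that your argument proves (\ref{2.1}) and silently substitutes it for (\ref{2.2}). A repair must engage the actual definition of the generalized exponential: since $P$ is rd-continuous and $\Delta$-periodic in shifts, the integration theorem stated at the end of the periodicity subsection gives
\[
\int_{t_{0}}^{t}P\left(  \tau\right)  \Delta\tau=\int_{\delta_{+}^{T}(t_{0}
)}^{\delta_{+}^{T}(t)}P\left(  \tau\right)  \Delta\tau,
\]
and the same shift-invariance holds for each integral built $\Delta$-periodically from $P$ (for the cylinder-transformed integrand one also needs $\mu(\delta_{+}^{T}(t))=\delta_{+}^{\Delta T}(t)\mu(t)$, which follows from Corollary \ref{Cor 1}); applying this invariance termwise to the representation defining $e_{P}$ yields (\ref{2.2}) directly. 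Alternatively, your IVP route becomes legitimate if you first impose the commutation condition displayed above, which forces $e_{P}\equiv\Phi_{P}$ and reduces the corollary to Lemma \ref{lem4.2} --- but that is an extra hypothesis the statement does not make.
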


\begin{theorem}
[\cite{2},Floquet decomposition]\label{thm1} Let $A$ be a matrix valued
function that is $\Delta$-periodic in shifts with period $T$. The transition
matrix for $A$ can be given in the form%
\begin{equation}
\Phi_{A}\left(  t,\tau\right)  =L\left(  t\right)  e_{R}\left(  t,\tau\right)
L^{-1}\left(  \tau\right)  ,\text{ for all }t,\tau\in\mathbb{T}^{\ast},
\label{3}%
\end{equation}
where $R:\mathbb{T\rightarrow C}^{n\times n}$ and $L\left(  t\right)  \in
C_{rd}^{1}\left(  \mathbb{T}^{\ast},\mathbb{R}^{n\times n}\right)  $ are both
periodic in shifts with period $T$ and invertible.
\end{theorem}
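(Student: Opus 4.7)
The plan is to mimic the classical continuous Floquet argument, replacing the monodromy point $t_0+T$ by its shifted analogue $\delta_+^T(t_0)$. I first introduce the monodromy matrix
\[
M:=\Phi_A\bigl(\delta_+^T(t_0),t_0\bigr),
\]
which is nonsingular because transition matrices are. Applying Theorem~\ref{thm4.1} to $M$ produces a function $R:\mathbb{T}\rightarrow\mathbb{C}^{n\times n}$ with $e_R(\delta_+^T(t_0),t_0)=M$. The first nontrivial step is to verify, from the construction of $R$ via the function $\Theta$ in~(\ref{4.1.2}), that $R$ is itself $\Delta$-periodic in shifts with period $T$. Granting this, the corollary to Lemma~\ref{lem4.2} yields the key identity
\[
e_R\bigl(\delta_+^T(t),\delta_+^T(t_0)\bigr)=e_R(t,t_0)\quad\text{for all }t\in\mathbb{T}^{\ast}.
\]

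Next I define the candidate Lyapunov-type factor
\[
L(t):=\Phi_A(t,t_0)\,e_R(t_0,t),
\]
equivalently $\Phi_A(t,t_0)=L(t)e_R(t,t_0)$. Invertibility and $C_{rd}^{1}$-regularity of $L$ are inherited from its two factors. To check that $L$ is $T$-periodic in shifts, Lemma~\ref{lem4.2} gives $\Phi_A(\delta_+^T(t),t_0)=\Phi_A(t,t_0)\,M$, while the identity above, combined with the cocycle property, yields $e_R(\delta_+^T(t),t_0)=e_R(t,t_0)\,M$, hence $e_R(t_0,\delta_+^T(t))=M^{-1}e_R(t_0,t)$. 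Substituting,
\[
L\bigl(\delta_+^T(t)\bigr)=\Phi_A(t,t_0)\,M\cdot M^{-1}e_R(t_0,t)=L(t),
\]
as required.

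The decomposition~(\ref{3}) then follows algebraically from the cocycle identity $\Phi_A(t,\tau)=\Phi_A(t,t_0)\Phi_A(\tau,t_0)^{-1}$: expanding $\Phi_A(t,t_0)=L(t)e_R(t,t_0)$ and $\Phi_A(\tau,t_0)^{-1}=e_R(t_0,\tau)L(\tau)^{-1}$, and using $e_R(t,t_0)e_R(t_0,\tau)=e_R(t,\tau)$, one obtains $\Phi_A(t,\tau)=L(t)e_R(t,\tau)L(\tau)^{-1}$.

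The \emph{main obstacle} is establishing the $\Delta$-periodicity of $R$ in shifts, which was invoked in the very first step. This demands a careful analysis of how $\Theta$ transforms under the shift $\delta_+^T$: essentially, advancing $t$ by one shift-period should increment $\Theta(t)$ by exactly $T$, so that the expression $M^{[\Theta(\sigma(t))-\Theta(s)]/T}$ in~(\ref{4.1.2}) is invariant under $t\mapsto\delta_+^T(t)$ in the appropriate sense. Once this is settled, the remainder of the proof reduces to the algebraic manipulation sketched above.
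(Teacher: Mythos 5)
You should first note a structural fact: the paper itself contains no proof of Theorem \ref{thm1} --- it is imported verbatim from \cite{2} --- so your attempt can only be measured against the construction in that reference, and your skeleton does reproduce it faithfully: form the monodromy matrix $M=\Phi_A(\delta_+^T(t_0),t_0)$, extract a logarithm $R$ via Theorem \ref{thm4.1}, set $L(t):=\Phi_A(t,t_0)e_R(t_0,t)$, deduce shift-invariance of $L$ from Lemma \ref{lem4.2} and its corollary, and finish with the cocycle algebra. All of the algebraic manipulations you display (e.g.\ $\Phi_A(\delta_+^T(t),t_0)=\Phi_A(t,t_0)M$, $e_R(t_0,\delta_+^T(t))=M^{-1}e_R(t_0,t)$, and the final factorization of $\Phi_A(t,\tau)$) are correct.

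The genuine gap is the one you flagged yourself and then did not close: the $\Delta$-periodicity in shifts of $R$, which is the only non-routine content of the theorem --- everything else is bookkeeping. What must be extracted from (\ref{4.1.2}) is the identity $R(\delta_+^T(t))\,\delta_+^{\Delta T}(t)=R(t)$, and this splits into two facts, of which your sketch gestures only at the first. (a) $\Theta(\delta_+^T(t))=\Theta(t)+T$: each summand in (\ref{4.1.1}) equals $T$, because by Definition \ref{def3.1}(5), Lemma \ref{lem3.1}(1), and Definition \ref{def3.1}(3) one has $\delta_-\left(s,\delta_+\left(T,s\right)\right)=\delta_+\left(T,\delta_-\left(s,s\right)\right)=\delta_+\left(T,t_0\right)=T$; combined with $m(\delta_+^T(t))=m(t)+1$, the matching transformation of the correction term $G$ in (\ref{G(t)}), and $\sigma(\delta_+^T(t))=\sigma(\delta_+(T,t))=\delta_+(T,\sigma(t))$ from Corollary \ref{Cor 1}, this keeps the numerator $M^{\frac{1}{T}\left[\Theta(\sigma(t))-\Theta(s)\right]}-I$ invariant under the substitution $s\mapsto\delta_+^T(s)$. (b) The denominator becomes $\sigma(\delta_+^T(t))-\delta_+^T(s)=\delta_+^T(\sigma(t))-\delta_+^T(s)$, which in the limit $s\to t$ contributes exactly the factor $\delta_+^{\Delta T}(t)$. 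Point (b) is not cosmetic: it is why $R$ is $\Delta$-periodic rather than merely periodic in shifts, and $\Delta$-periodicity is precisely the hypothesis of the corollary to Lemma \ref{lem4.2} that you invoke --- the phrase ``periodic in shifts'' in the theorem statement would not license that appeal. Two smaller steps also need justification: the cocycle identity $e_R(t,t_0)e_R(t_0,\tau)=e_R(t,\tau)$ for matrix-valued $R$ is legitimate here only because every value of $R$ is a limit of difference quotients of powers of the single matrix $M$, so the values of $R$ commute pairwise and $e_R$ coincides with the transition matrix $\Phi_R$; and the invertibility of $e_R(t_0,t)$ (hence of your $L$) requires regressivity of $R$, which holds since at a right-scattered $t$ one computes $I+\mu(t)R(t)=M^{\frac{1}{T}\left[\Theta(\sigma(t))-\Theta(t)\right]}$, invertible because $M$ is.
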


\begin{theorem}
[\cite{2}]\label{thm3}There exists an initial state $x\left(  t_{0}\right)
=x_{0}\neq0$ such that the solution of (\ref{4.1}) is $T$-periodic in shifts
if and only if one of the eigenvalues of the matrix%
\[
e_{R}\left(  \delta_{+}^{T}\left(  t_{0}\right)  ,t_{0}\right)  =\Phi
_{A}\left(  \delta_{+}^{T}\left(  t_{0}\right)  ,t_{0}\right)
\]
is $1$.
\end{theorem}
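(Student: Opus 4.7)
The plan is to exploit the Floquet decomposition of Theorem~\ref{thm1} together with the shift-invariance of the transition matrix recorded in Lemma~\ref{lem4.2}. Throughout, the solution of (\ref{4.1}) is $x(t)=\Phi_A(t,t_0)x_0$, and $T$-periodicity in shifts means, by Definition~\ref{def3.3}, that $x(\delta_+^T(t))=x(t)$ for all $t\in\mathbb{T}^{\ast}$. Note also that the asserted identity $\Phi_A(\delta_+^T(t_0),t_0)=e_R(\delta_+^T(t_0),t_0)$ follows from (\ref{3}) evaluated at $\tau=t_0$, $t=\delta_+^T(t_0)$, using that $L$ is $T$-periodic in shifts so that $L(\delta_+^T(t_0))=L(t_0)$ (under the natural normalization $L(t_0)=I$); in any case the two matrices are similar, hence share eigenvalues.

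For necessity, suppose $x(t)=\Phi_A(t,t_0)x_0$ is $T$-periodic in shifts. Specializing $x(\delta_+^T(t))=x(t)$ at $t=t_0$ and recalling $\delta_+(T,t_0)=\delta_+^T(t_0)$ yields
\[
\Phi_A(\delta_+^T(t_0),t_0)\,x_0=x_0,
\]
so $x_0\neq 0$ is an eigenvector with eigenvalue $1$ of $\Phi_A(\delta_+^T(t_0),t_0)=e_R(\delta_+^T(t_0),t_0)$.

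For sufficiency, let $v\neq 0$ satisfy $\Phi_A(\delta_+^T(t_0),t_0)v=v$ and set $x_0=v$ (or $x_0=L(t_0)v$ without the normalization). Define $x(t)=\Phi_A(t,t_0)x_0$. The central computation uses the semigroup property of the transition matrix followed by the shift-invariance (\ref{2.1}):
\[
x(\delta_+^T(t))=\Phi_A(\delta_+^T(t),t_0)x_0=\Phi_A(\delta_+^T(t),\delta_+^T(t_0))\,\Phi_A(\delta_+^T(t_0),t_0)\,x_0=\Phi_A(t,t_0)\,v=x(t),
\]
where the third equality applies (\ref{2.1}) to the first factor and the eigenvector condition to the second. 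Hence $x$ is $T$-periodic in shifts, with $x_0\neq 0$.

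The argument is essentially routine once the preceding results are in place; the only real work is already in the Floquet decomposition (Theorem~\ref{thm1}) and the shift-translation identity (Lemma~\ref{lem4.2}). The main bookkeeping obstacle is the clean identification of $\Phi_A(\delta_+^T(t_0),t_0)$ with $e_R(\delta_+^T(t_0),t_0)$, which requires invoking the periodicity in shifts of the factor $L$ in the Floquet decomposition; the eigenvalue conclusion is unaffected because similar matrices share their spectrum.
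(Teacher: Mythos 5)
Your proof is correct and follows the standard monodromy-matrix route that the source \cite{2} uses: necessity by evaluating periodicity at $t=t_0$ to get $\Phi_A(\delta_+^T(t_0),t_0)x_0=x_0$, and sufficiency by combining the cocycle property with the shift-invariance (\ref{2.1}) of Lemma \ref{lem4.2}, with the identification $e_R(\delta_+^T(t_0),t_0)=\Phi_A(\delta_+^T(t_0),t_0)$ holding by the very construction of $R$ in Theorem \ref{thm4.1} (equivalently, your normalization $L(t_0)=I$ in the Floquet factorization (\ref{3})). Note that the present paper states Theorem \ref{thm3} without proof, importing it from \cite{2}, so there is nothing in your argument that diverges from the intended one; the only unremarked detail, that $x(\delta_-^T(t))=x(t)$ follows automatically from the $\delta_+$ case via $\delta_+^T(\delta_-^T(t))=t$, is harmless.
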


\subsubsection{Nonhomogeneous case}

Let us consider the nonhomogeneous regressive nonautonomous linear dynamic
initial value problem%
\begin{equation}
x^{\Delta}\left(  t\right)  =A\left(  t\right)  x\left(  t\right)  +F\left(
t\right)  ,\text{ }x\left(  t_{0}\right)  =x_{0}, \label{6}%
\end{equation}
where $A:\mathbb{T}^{\ast}\mathbb{\rightarrow R}^{n\times n}$,$\ F\in
C_{rd}\left(  \mathbb{T}^{\ast},\mathbb{R}^{n}\right)  \cap\mathcal{R}\left(
\mathbb{T}^{\ast},\mathbb{R}^{n}\right)  $. Hereafter, we suppose both $A$ and
$F$ are $\Delta$-periodic in shifts with the period $T$.

\begin{theorem}
[\cite{2}]\label{thm5} For any initial point $t_{0}\in\mathbb{T}^{\ast}$ and
for any function $F$ that is $\Delta$-periodic in shifts with period $T$,
there exists an initial state $x\left(  t_{0}\right)  =x_{0}$ such that the
solution of (\ref{6}) is $T$-periodic in shifts if and only if there does not
exist a nonzero $z\left(  t_{0}\right)  =z_{0}$ and $t_{0}\in\mathbb{T}^{\ast
}$ such that the $T$-periodic homogeneous initial value problem%
\begin{equation}
z^{\Delta}\left(  t\right)  =A\left(  t\right)  z\left(  t\right)  ,\text{
}z\left(  t_{0}\right)  =z_{0}, \label{8}%
\end{equation}
has a solution that is $T$-periodic in shifts.
\end{theorem}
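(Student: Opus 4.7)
My plan is to reduce the claim to a linear-algebra Fredholm alternative applied to the monodromy operator $M := \Phi_{A}(\delta_{+}^{T}(t_{0}),t_{0})$, using the variation of parameters formula together with Lemma~\ref{lem4.2}.

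First, I would write down the unique solution of (\ref{6}) via variation of parameters,
\[
x(t)=\Phi_{A}(t,t_{0})x_{0}+\int_{t_{0}}^{t}\Phi_{A}(t,\sigma(s))F(s)\,\Delta s,
\]
and then show that $x$ is $T$-periodic in shifts (i.e.\ $x(\delta_{+}^{T}(t))=x(t)$ on $\mathbb{T}^{\ast}$) if and only if the single boundary condition $x(\delta_{+}^{T}(t_{0}))=x_{0}$ holds. The nontrivial direction here is the main obstacle: I would introduce $y(t):=x(\delta_{+}^{T}(t))$, differentiate via the chain rule, and then use the $\Delta$-periodicity in shifts of both $A$ and $F$ (Definition~\ref{def3.4}) to obtain
\[
y^{\Delta}(t)=A(\delta_{+}^{T}(t))\,\delta_{+}^{\Delta T}(t)\,y(t)+F(\delta_{+}^{T}(t))\,\delta_{+}^{\Delta T}(t)=A(t)y(t)+F(t).
\]
Thus $y$ solves the same initial value problem as $x$ with initial value $y(t_{0})=x(\delta_{+}^{T}(t_{0}))$; if this equals $x_{0}$, uniqueness of solutions to the regressive IVP forces $y\equiv x$.

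Second, evaluating the variation of parameters formula at $t=\delta_{+}^{T}(t_{0})$ and imposing the boundary condition turns ``there exists a $T$-periodic-in-shifts solution'' into the linear system
\[
(I-M)\,x_{0}=\int_{t_{0}}^{\delta_{+}^{T}(t_{0})}\Phi_{A}(\delta_{+}^{T}(t_{0}),\sigma(s))\,F(s)\,\Delta s.
\]
By the finite-dimensional Fredholm alternative, this system is solvable in $x_{0}$ for every admissible right-hand side (i.e.\ for every $\Delta$-periodic $F$ in shifts) if and only if $I-M$ is invertible, equivalently, $1$ is not an eigenvalue of $M$.

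Finally, I would invoke Theorem~\ref{thm3}: the matrix $M=\Phi_{A}(\delta_{+}^{T}(t_{0}),t_{0})=e_{R}(\delta_{+}^{T}(t_{0}),t_{0})$ has $1$ as an eigenvalue precisely when the homogeneous problem (\ref{8}) admits a nonzero initial state $z_{0}$ whose solution is $T$-periodic in shifts. Negating this characterization in both directions gives the stated equivalence, completing the argument. Apart from the computation showing that one-point periodicity at $t_{0}$ propagates to all of $\mathbb{T}^{\ast}$, every other step is standard linear algebra once the transition matrix framework of Section~2 is available.
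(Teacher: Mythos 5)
The paper itself offers no proof of Theorem \ref{thm5} --- it is quoted from \cite{2} --- and your argument follows exactly the standard route of that source: variation of parameters, reduction of shift-periodicity to the one-point condition $x(\delta_{+}^{T}(t_{0}))=x_{0}$ via the time-scale chain rule and uniqueness (your computation with $y(t)=x(\delta_{+}^{T}(t))$ is correct and correctly uses condition (\ref{3.3}), the $\Delta$-differentiability of the shifts, together with the $\Delta$-periodicity of $A$ and $F$), the boundary equation $(I-M)x_{0}=\int_{t_{0}}^{\delta_{+}^{T}(t_{0})}\Phi_{A}(\delta_{+}^{T}(t_{0}),\sigma(s))F(s)\,\Delta s$, and Theorem \ref{thm3} to translate invertibility of $I-M$ into the non-existence of nonzero periodic solutions of (\ref{8}).

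There is, however, one genuine gap at the Fredholm step. The finite-dimensional alternative tells you that $(I-M)x_{0}=v$ is solvable for \emph{every} $v\in\mathbb{R}^{n}$ if and only if $I-M$ is invertible; but your hypothesis in the ``only if'' direction is solvability only for those $v$ realized as $v(F)=\int_{t_{0}}^{\delta_{+}^{T}(t_{0})}\Phi_{A}(\delta_{+}^{T}(t_{0}),\sigma(s))F(s)\,\Delta s$ with $F$ $\Delta$-periodic in shifts. You silently identify ``admissible right-hand sides'' with all of $\mathbb{R}^{n}$, and this needs an argument: it is not a priori clear on a shift-periodic time scale (say $\overline{q^{\mathbb{Z}}}$) that the map $F\mapsto v(F)$ is onto, and indeed naive choices fail --- e.g.\ $F(s)=\Phi_{A}(\sigma(s),t_{0})c$ is $\Delta$-periodic in shifts only when $Mc=c$, by Lemma \ref{lem4.2}. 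The standard repair is a bump construction: for a given $v$, set $F(s):=w(s)\,\Phi_{A}(\sigma(s),\delta_{+}^{T}(t_{0}))\,v$ on $[t_{0},\delta_{+}^{T}(t_{0}))_{\mathbb{T}}$, where $w$ is a scalar rd-continuous weight vanishing near the endpoints with $\int_{t_{0}}^{\delta_{+}^{T}(t_{0})}w(s)\,\Delta s=1$, and extend $F$ to $\mathbb{T}^{\ast}$ by the rule $F(\delta_{\pm}^{T}(t))\delta_{\pm}^{\Delta T}(t)=F(t)$; then $F$ is $\Delta$-periodic in shifts, rd-continuous, and $v(F)=v$, so if $I-M$ were singular you could pick $v\notin\operatorname{range}(I-M)$ and contradict solvability. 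With this construction supplied (and the remark, implicit in Theorem \ref{thm3} and Lemma \ref{lem4.2}, that the monodromy matrices at different base points are similar, so the criterion is independent of $t_{0}$), your proof is complete and coincides with the one in \cite{2}.
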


For details about Floquet theory based on new periodicity concept on time
scales, we refer readers \cite{2}.

\section{Existence of periodic solutions}

For $T>0,$ let $P_{T}$ be the set of all $n$-vector functions $x(t),$ periodic
in shifts with period $T.$ Then $\left(  P_{T},\left\Vert .\right\Vert
\right)  $ is a Banach space endowed with the norm%
\[
\left\Vert x\right\Vert =\max_{t\in\left[  t_{0},\delta_{+}^{T}(t_{0})\right]
_{\mathbb{T}}}\left\vert x(t)\right\vert .
\]
Also for an $n\times n$ matrix valued function $A$, given by $A\left(
t\right)  :=\left[  a_{ij}\left(  t\right)  \right]  $, we define the norm
$\left\Vert A\right\Vert $ by
\[
\left\Vert A\right\Vert =\sup_{t\in\lbrack t_{0},\infty)_{\mathbb{T}}%
}\left\vert A\left(  t\right)  \right\vert ,
\]
where%
\[
\left\vert A(t)\right\vert :=\max_{1\leq i\leq n}%
{\displaystyle\sum\limits_{j=1}^{n}}
\left\vert a_{ij}\left(  t\right)  \right\vert .
\]

Now assume that $\mathbb{T}$ is a time scale that is $T$-periodic in shifts
and consider the delay dynamic system%
\begin{equation}
x^{\Delta}(t)=A(t)x(t)+Q^{\Delta}\left(  t,x\left(  \delta_{-}(s,t)\right)
\right)  +G\left(  t,x(t),x\left(  \delta_{-}(s,t)\right)  \right)  ,
\label{5.1}%
\end{equation}
where $A\in C_{rd}(\mathbb{T}^{\ast},\mathbb{R}^{n\times n}),$ $Q\in
C_{rd}\left(  \mathbb{T}^{\ast}\times\mathbb{T}^{\ast n},\mathbb{R}%
^{n}\right)  \mathbb{\ }$and $G\in C_{rd}\left(  \mathbb{T}^{\ast}%
\times\mathbb{T}^{\ast n}\times\mathbb{T}^{\ast n},\mathbb{R}^{n}\right)  .$
Since we focus on the existence of a periodic solution of (\ref{5.1}), we have
the following periodicity assumptions:%
\begin{equation}
A\text{ is }\Delta\text{-periodic in shifts, i.e. }A(\delta_{\pm}^{T}\left(
t\right)  )\delta_{\pm}^{\Delta T}\left(  t\right)  =A(t)\text{ for all }%
t\in\mathbb{T}^{\ast}, \label{5.2}%
\end{equation}%
\begin{equation}
Q(\delta_{\pm}^{T}\left(  t\right)  ,x\left(  \delta_{-}(s,\delta_{\pm}%
^{T}\left(  t\right)  \right)  )=Q\left(  t,x\left(  \delta_{-}(s,t)\right)
\right)  \text{ for all }t\in\mathbb{T}^{\ast}\text{ and }x\in P_{T},
\label{5.3}%
\end{equation}
and%
\begin{equation}
G\left(  \delta_{\pm}^{T}\left(  t\right)  ,x(\delta_{\pm}^{T}\left(
t\right)  ),x\left(  \delta_{-}(s,\delta_{\pm}^{T}\left(  t\right)  )\right)
\right)  \delta_{\pm}^{\Delta T}\left(  t\right)  =G\left(  t,x(t),x\left(
\delta_{-}(s,t)\right)  \right)  \text{ for all }t\in\mathbb{T}^{\ast}\text{
and }x\in P_{T}. \label{5.4}%
\end{equation}
In order to prove the existence of a nonzero periodic solution in shifts
$\delta_{\pm}$ for system (\ref{5.1}), we assume that%
\begin{equation}
Q^{\Delta}\left(  t,0\right)  +G\left(  t,0,0\right)  \neq0 \label{5.4.1}%
\end{equation}
for some $t\in\mathbb{T}^{\ast}$.

Throughout the paper, we assume that the homogeneous system
\begin{equation}
z^{\Delta}\left(  t\right)  =A\left(  t\right)  z\left(  t\right)  ,\text{
}z\left(  t_{0}\right)  =z_{0} \label{5.7}%
\end{equation}
is non-critical. That is (\ref{5.7}) has no periodic nonzero solution in
shifts $\delta_{\pm}.$

\begin{lemma}
\label{lem5.1}Suppose that (\ref{5.2}-\ref{5.4.1}) hold. If $x(t)\in P_{T},$
then $x(t)$ is a solution of equation (\ref{5.1}) satisfying $x(t_{0})=x_{0}$
if and only if%
\begin{align}
x(t)  &  =Q(t,x\left(  \delta_{-}(s,t)\right)  )+\nonumber\\
&  \Phi_{A}(t,t_{0})\left(  \Phi_{A}^{-1}(\delta_{+}^{T}(t_{0}),t_{0}%
)-I\right)  ^{-1}\times\nonumber\\
&
{\displaystyle\int_{t}^{\delta_{+}^{T}(t)}}
\Phi_{A}^{-1}(\sigma(u),t_{0})\left[  A(u)Q(u,x(\delta_{-}%
(s,u)))+G(u,x(u),x(\delta_{-}(s,u)))\right]  \Delta u. \label{5.8}%
\end{align}

\end{lemma}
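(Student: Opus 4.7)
The plan is to reduce the neutral equation to a standard nonhomogeneous linear system by the substitution $y(t) := x(t) - Q(t, x(\delta_{-}(s,t)))$. A direct computation gives
\[
y^{\Delta}(t) = x^{\Delta}(t) - Q^{\Delta}(t, x(\delta_{-}(s,t))) = A(t)x(t) + G(\cdot) = A(t)y(t) + F(t),
\]
where $F(t) := A(t) Q(t, x(\delta_{-}(s,t))) + G(t, x(t), x(\delta_{-}(s,t)))$. Since $x\in P_{T}$, the assumptions (\ref{5.3})--(\ref{5.4}) together with (\ref{5.2}) imply that $F$ is $\Delta$-periodic in shifts with period $T$, and that $y\in P_{T}$ (periodicity in shifts, not $\Delta$-periodicity), so $y(\delta_{+}^{T}(t)) = y(t)$.

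For the forward direction, I would use the product rule and the identity $(\Phi_{A}^{-1}(t,t_{0}))^{\Delta} = -\Phi_{A}^{-1}(\sigma(t),t_{0})A(t)$ to obtain
\[
\bigl(\Phi_{A}^{-1}(t,t_{0})y(t)\bigr)^{\Delta} = \Phi_{A}^{-1}(\sigma(t),t_{0})\bigl[y^{\Delta}(t)-A(t)y(t)\bigr] = \Phi_{A}^{-1}(\sigma(t),t_{0})F(t).
\]
Integrating both sides from $t$ to $\delta_{+}^{T}(t)$ and using $y(\delta_{+}^{T}(t))=y(t)$ yields
\[
\bigl[\Phi_{A}^{-1}(\delta_{+}^{T}(t),t_{0})-\Phi_{A}^{-1}(t,t_{0})\bigr]y(t) = \int_{t}^{\delta_{+}^{T}(t)} \Phi_{A}^{-1}(\sigma(u),t_{0})F(u)\,\Delta u.
\]

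The key algebraic step, and the part that needs the machinery from \cite{2}, is to factor the bracketed matrix. Applying Lemma \ref{lem4.2} (with the semigroup property $\Phi_{A}(\delta_{+}^{T}(t),t_{0})=\Phi_{A}(\delta_{+}^{T}(t),\delta_{+}^{T}(t_{0}))\,\Phi_{A}(\delta_{+}^{T}(t_{0}),t_{0})$) gives $\Phi_{A}(\delta_{+}^{T}(t),t_{0}) = \Phi_{A}(t,t_{0})\Phi_{A}(\delta_{+}^{T}(t_{0}),t_{0})$, hence
\[
\Phi_{A}^{-1}(\delta_{+}^{T}(t),t_{0})-\Phi_{A}^{-1}(t,t_{0}) = \bigl[\Phi_{A}^{-1}(\delta_{+}^{T}(t_{0}),t_{0})-I\bigr]\Phi_{A}^{-1}(t,t_{0}).
\]
The non-criticality hypothesis together with Theorem \ref{thm3} guarantees that $1$ is not an eigenvalue of $\Phi_{A}(\delta_{+}^{T}(t_{0}),t_{0})$, so $\Phi_{A}^{-1}(\delta_{+}^{T}(t_{0}),t_{0})-I$ is invertible. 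Solving for $y(t)$ and adding back $Q(t,x(\delta_{-}(s,t)))$ delivers (\ref{5.8}).

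For the converse, assume $x\in P_{T}$ satisfies (\ref{5.8}). Setting $y(t)=x(t)-Q(t,x(\delta_{-}(s,t)))$ and $C := [\Phi_{A}^{-1}(\delta_{+}^{T}(t_{0}),t_{0})-I]^{-1}$, write $y(t) = \Phi_{A}(t,t_{0})\,C\bigl[H(\delta_{+}^{T}(t)) - H(t)\bigr]$ with $H(t):=\int_{t_{0}}^{t}\Phi_{A}^{-1}(\sigma(u),t_{0})F(u)\Delta u$. The main obstacle here, which I expect to require the most care, is differentiating the upper limit $\delta_{+}^{T}(t)$: by the time-scale chain rule $(H\circ\delta_{+}^{T})^{\Delta}(t) = H^{\Delta}(\delta_{+}^{T}(t))\,\delta_{+}^{\Delta T}(t)$. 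Using Corollary \ref{Cor 1} to get $\sigma(\delta_{+}^{T}(t)) = \delta_{+}^{T}(\sigma(t))$, the factorization above applied at $\sigma(t)$, and the $\Delta$-periodicity $F(\delta_{+}^{T}(t))\delta_{+}^{\Delta T}(t) = F(t)$ (which combines (\ref{5.2})--(\ref{5.4}) with $x\in P_{T}$), I expect the derivative of $C[H(\delta_{+}^{T}(t))-H(t)]$ to collapse to $\Phi_{A}^{-1}(\sigma(t),t_{0})F(t)$. Combined with the product rule $y^{\Delta}=A(t)y+\Phi_{A}(\sigma(t),t_{0})\cdot[\text{that derivative}]$, this gives $y^{\Delta}=A(t)y(t)+F(t)$, which unwinds the substitution and recovers (\ref{5.1}).
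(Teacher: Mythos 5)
Your proposal is correct, and it reaches (\ref{5.8}) by a noticeably more streamlined route than the paper, even though the main ingredients coincide (the substitution $y(t)=x(t)-Q(t,x(\delta_{-}(s,t)))$, the identity $\left(  \Phi_{A}^{-1}(t,t_{0})\right)  ^{\Delta}=-\Phi_{A}^{-1}(\sigma(t),t_{0})A(t)$ of (\ref{5.9}), and Lemma \ref{lem4.2}). The paper integrates the exact-derivative identity from $t_{0}$ to $t$, uses periodicity only at the single point $t_{0}$ (via $x(\delta_{+}^{T}(t_{0}))=x_{0}$) to solve for the initial vector, substitutes back, and then must perform the change of variables $u=\delta_{-}^{T}(\hat{u})$, invoking the $\Delta$-periodicity of the integrand together with $\Phi_{A}(t_{0},\delta_{-}^{T}(v))=\Phi_{A}(\delta_{+}^{T}(t_{0}),v)$, to fuse the integrals over $\left[  t,\delta_{+}^{T}(t_{0})\right]  _{\mathbb{T}}$ and $\left[  \delta_{+}^{T}(t_{0}),\delta_{+}^{T}(t)\right]  _{\mathbb{T}}$ into the single integral over $\left[  t,\delta_{+}^{T}(t)\right]  _{\mathbb{T}}$. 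You instead integrate directly over $\left[  t,\delta_{+}^{T}(t)\right]  _{\mathbb{T}}$ and exploit $y(\delta_{+}^{T}(t))=y(t)$ at \emph{every} $t$, so the change of variables never arises and the inversion reduces to the factorization $\Phi_{A}^{-1}(\delta_{+}^{T}(t),t_{0})-\Phi_{A}^{-1}(t,t_{0})=\left(  \Phi_{A}^{-1}(\delta_{+}^{T}(t_{0}),t_{0})-I\right)  \Phi_{A}^{-1}(t,t_{0})$; notably, this is exactly the identity the paper later uses without derivation in the compactness argument (the substitution following (\ref{5.15}) in Lemma \ref{c-c}), and your deduction of it from Lemma \ref{lem4.2} supplies what the paper leaves implicit. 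Your explicit appeal to Theorem \ref{thm3} to justify invertibility of $\Phi_{A}^{-1}(\delta_{+}^{T}(t_{0}),t_{0})-I$ also pins down precisely where the standing non-criticality assumption on (\ref{5.7}) enters, which the paper uses silently. Finally, you prove the converse implication in detail, via the time-scale chain rule applied to the upper limit $\delta_{+}^{T}(t)$ and Corollary \ref{Cor 1}, whereas the paper declares that direction straightforward; your computation confirms it does collapse to $y^{\Delta}=A(t)y+F(t)$ as claimed. In short: the paper's route mirrors the classical variation-of-parameters argument and showcases the shift-substitution machinery, while yours is shorter, makes both directions of the equivalence explicit, and isolates cleanly which hypothesis (pointwise periodicity of $y$, $\Delta$-periodicity of $F$, non-criticality) is responsible for each step.
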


\begin{proof}
Let $x(t)\in P_{T}$ be a solution of (\ref{5.1}) satisfying $x(t_{0})=x_{0}$
and $\Phi_{A}(t,t_{0})$ be the transition matrix of system (\ref{5.7}). The
necessity part of the proof is straightforward. For the sufficiency part we
employ (\ref{5.1}) to get%
\begin{align*}
\left[  x(t)-Q(t,x\left(  \delta_{-}(s,t)\right)  )\right]  ^{\Delta}  &
=A(t)\left(  x(t)-Q(t,x\left(  \delta_{-}(s,t)\right)  )\right)
+A(t)Q(t,x\left(  \delta_{-}(s,t)\right)  )\\
&  +G\left(  t,x(t),x\left(  \delta_{-}(s,t)\right)  \right)  .
\end{align*}
Since $\Phi_{A}(t,t_{0})\Phi_{A}^{-1}(t,t_{0})=I,$ we have%
\begin{align*}
0  &  =\left(  \Phi_{A}(t,t_{0})\Phi_{A}^{-1}(t,t_{0})\right)  ^{\Delta}\\
&  =\Phi_{A}^{\Delta}(t,t_{0})\Phi_{A}^{-1}(t,t_{0})+\Phi_{A}(\sigma
(t),t_{0})\left(  \Phi_{A}^{-1}(t,t_{0})\right)  ^{\Delta}\\
&  =\left(  A(t)\Phi_{A}(t,t_{0})\right)  \Phi_{A}^{-1}(t,t_{0})+\Phi
_{A}(\sigma(t),t_{0})\left(  \Phi_{A}^{-1}(t,t_{0})\right)  ^{\Delta}\\
&  =A(t)+\Phi_{A}(\sigma(t),t_{0})\left(  \Phi_{A}^{-1}(t,t_{0})\right)
^{\Delta}.
\end{align*}
That is,%
\begin{equation}
\left(  \Phi_{A}^{-1}(t,t_{0})\right)  ^{\Delta}=-\Phi_{A}^{-1}(\sigma
(t),t_{0})A(t). \label{5.9}%
\end{equation}
If $x(t)$ is a solution of (\ref{5.1}) satisfying $x(t_{0})=x_{0},$ then
\begin{align*}
\left\{  \Phi_{A}^{-1}(t,t_{0})\left(  x(t)-Q(t,x\left(  \delta_{-}%
(s,t)\right)  )\right)  \right\}  ^{\Delta}  &  =\left(  \Phi_{A}^{-1}%
(t,t_{0})\right)  ^{\Delta}\left(  x(t)-Q(t,x\left(  \delta_{-}(s,t)\right)
)\right) \\
&  +\Phi_{A}^{-1}(\sigma(t),t_{0})\left(  x(t)-Q(t,x\left(  \delta
_{-}(s,t)\right)  )\right)  ^{\Delta}\\
&  =-\Phi_{A}^{-1}(\sigma(t),t_{0})A(t)\left(  x(t)-Q(t,x\left(  \delta
_{-}(s,t)\right)  )\right) \\
&  +\Phi_{A}^{-1}(\sigma(t),t_{0})\left[  A(t)\left(  x(t)-Q(t,x\left(
\delta_{-}(s,t)\right)  )\right)  \right. \\
&  +\left.  A(t)Q(t,x\left(  \delta_{-}(s,t)\right)  )+G\left(
t,x(t),x\left(  \delta_{-}(s,t)\right)  \right)  \right] \\
&  =\Phi_{A}^{-1}(\sigma(t),t_{0})\left[  A(t)Q(t,x\left(  \delta
_{-}(s,t)\right)  )+G\left(  t,x(t),x\left(  \delta_{-}(s,t)\right)  \right)
\right]  .
\end{align*}
Integrating the last equality from $t_{0}$ to $t,$ we arrive at%
\begin{align}
x(t)  &  =Q(t,x\left(  \delta_{-}(s,t)\right)  )+\Phi_{A}(t,t_{0})\left(
x_{0}-Q(t_{0},x\left(  \delta_{-}(s,t_{0})\right)  )\right) \nonumber\\
&  +\Phi_{A}(t,t_{0})%
{\displaystyle\int\limits_{t_{0}}^{t}}
\Phi_{A}^{-1}(\sigma(u),t_{0})\left[  A(u)Q(u,x\left(  \delta_{-}(s,u)\right)
)+G\left(  u,x(u),x\left(  \delta_{-}(s,u)\right)  \right)  \right]  \Delta u.
\label{5.10}%
\end{align}
Since $x(\delta_{+}^{T}(t_{0}))=x(t_{0})=x_{0},$ (\ref{5.10}) implies%
\begin{align}
x(t_{0})-Q(t_{0},x(\delta_{-}(s,t_{0})))  &  =\Phi_{A}(\delta_{+}^{T}%
(t_{0}),t_{0})\left(  x_{0}-Q(t_{0},x\left(  \delta_{-}(s,t_{0})\right)
\right)  )\nonumber\\
&  +\Phi_{A}(\delta_{+}^{T}(t_{0}),t_{0})%
{\displaystyle\int\limits_{t_{0}}^{\delta_{+}^{T}(t_{0})}}
\Phi_{A}^{-1}(\sigma(u),t_{0})\left[  A(u)Q(u,x\left(  \delta_{-}(s,u)\right)
)\right. \label{5.11}\\
&  +\left.  G\left(  u,x(u),x\left(  \delta_{-}(s,u)\right)  \right)  \right]
\Delta u.\nonumber
\end{align}
Substituting (\ref{5.11}) into (\ref{5.10}) yields%
\begin{align}
x(t)  &  =Q(t,x\left(  \delta_{-}(s,t)\right)  )+\Phi_{A}(t,t_{0})\left(
I-\Phi_{A}(\delta_{+}^{T}(t_{0}),t_{0})\right)  ^{-1}\Phi_{A}(\delta_{+}%
^{T}(t_{0}),t_{0})\nonumber\\
&  \times%
{\displaystyle\int\limits_{t_{0}}^{\delta_{+}^{T}(t_{0})}}
\Phi_{A}^{-1}(\sigma(u),t_{0})\left[  A(u)Q(u,x\left(  \delta_{-}(s,u)\right)
)+G\left(  u,x(u),x\left(  \delta_{-}(s,u)\right)  \right)  \right]  \Delta
u\text{ }\nonumber\\
&  +\Phi_{A}(t,t_{0})%
{\displaystyle\int\limits_{t_{0}}^{t}}
\Phi_{A}^{-1}(\sigma(u),t_{0})\left[  A(u)Q(u,x\left(  \delta_{-}(s,u)\right)
)+G\left(  u,x(u),x\left(  \delta_{-}(s,u)\right)  \right)  \right]  \Delta
u.\text{ } \label{5.12}%
\end{align}
In order to show that (\ref{5.12}) is equivalent to (\ref{5.8}) we use
\begin{align*}
\left(  I-\Phi_{A}(\delta_{+}^{T}(t_{0}),t_{0})\right)  ^{-1}  &  =\left(
\Phi_{A}(\delta_{+}^{T}(t_{0}),t_{0})\left(  \Phi_{A}^{-1}(\delta_{+}%
^{T}(t_{0}),t_{0})-I\right)  \right)  ^{-1}\\
&  =\left(  \Phi_{A}^{-1}(\delta_{+}^{T}(t_{0}),t_{0})-I\right)  ^{-1}\Phi
_{A}^{-1}(\delta_{+}^{T}(t_{0}),t_{0}).
\end{align*}
to get%
\begin{align*}
x(t)  &  =Q(t,x\left(  \delta_{-}(s,t)\right)  )+\Phi_{A}(t,t_{0})\left(
\Phi_{A}^{-1}(\delta_{+}^{T}(t_{0}),t_{0})-I\right)  ^{-1}\Phi_{A}^{-1}%
(\delta_{+}^{T}(t_{0}),t_{0})\Phi_{A}(\delta_{+}^{T}(t_{0}),t_{0})\\
&  \times%
{\displaystyle\int\limits_{t_{0}}^{\delta_{+}^{T}(t_{0})}}
\Phi_{A}^{-1}(\sigma(u),t_{0})\left[  A(u)Q(u,x\left(  \delta_{-}(s,u)\right)
)+G\left(  u,x(u),x\left(  \delta_{-}(s,u)\right)  \right)  \right]  \Delta
u\\
&  +\Phi_{A}(t,t_{0})%
{\displaystyle\int\limits_{t_{0}}^{t}}
\Phi_{A}^{-1}(\sigma(u),t_{0})\left[  A(u)Q(u,x\left(  \delta_{-}(s,u)\right)
)+G\left(  u,x(u),x\left(  \delta_{-}(s,u)\right)  \right)  \right]  \Delta u,
\end{align*}
and we have the following equality:%
\begin{align*}
x(t)  &  =Q(t,x\left(  \delta_{-}(s,t)\right)  )+\Phi_{A}(t,t_{0})\left(
\Phi_{A}^{-1}(\delta_{+}^{T}(t_{0}),t_{0})-I\right)  ^{-1}\\
&  \times\left[
{\displaystyle\int\limits_{t_{0}}^{\delta_{+}^{T}(t_{0})}}
\Phi_{A}^{-1}(\sigma(u),t_{0})\left[  A(u)Q(u,x\left(  \delta_{-}(s,u)\right)
)+G\left(  u,x(u),x\left(  \delta_{-}(s,u)\right)  \right)  \right]  \Delta
u\right. \\
&  +\left.  \Phi_{A}^{-1}(\delta_{+}^{T}(t_{0}),t_{0})%
{\displaystyle\int\limits_{t_{0}}^{t}}
\Phi_{A}^{-1}(\sigma(u),t_{0})\left[  A(u)Q(u,x\left(  \delta_{-}(s,u)\right)
)+G\left(  u,x(u),x\left(  \delta_{-}(s,u)\right)  \right)  \right]  \Delta
u\right. \\
&  -\left.
{\displaystyle\int\limits_{t_{0}}^{t}}
\Phi_{A}^{-1}(\sigma(u),t_{0})\left[  A(u)Q(u,x\left(  \delta_{-}(s,u)\right)
)+G\left(  u,x(u),x\left(  \delta_{-}(s,u)\right)  \right)  \right]  \Delta
u\right]  .
\end{align*}
Thus, $x(t)$ can be stated as follows
\begin{align*}
x(t)  &  =Q(t,x\left(  \delta_{-}(s,t)\right)  )+\Phi_{A}(t,t_{0})\left(
\Phi_{A}^{-1}(\delta_{+}^{T}(t_{0}),t_{0})-I\right)  ^{-1}\\
&  \times\left[
{\displaystyle\int\limits_{t}^{\delta_{+}^{T}(t_{0})}}
\Phi_{A}^{-1}(\sigma(u),t_{0})\left[  A(u)Q(u,x\left(  \delta_{-}(s,u)\right)
)+G\left(  u,x(u),x\left(  \delta_{-}(s,u)\right)  \right)  \right]  \Delta
u\right. \\
&  +\left.  \Phi_{A}^{-1}(\delta_{+}^{T}(t_{0}),t_{0})%
{\displaystyle\int\limits_{t_{0}}^{t}}
\Phi_{A}^{-1}(\sigma(u),t_{0})\left[  A(u)Q(u,x\left(  \delta_{-}(s,u)\right)
)+G\left(  u,x(u),x\left(  \delta_{-}(s,u)\right)  \right)  \right]  \Delta
u\right]  .
\end{align*}
If we let $u=\delta_{-}^{T}(\hat{u}),$ we get%
\begin{align*}
x(t)  &  =Q(t,x\left(  \delta_{-}(s,t)\right)  )+\Phi_{A}(t,t_{0})\left(
\Phi_{A}^{-1}(\delta_{+}^{T}(t_{0}),t_{0})-I\right)  ^{-1}\\
&  \times\left[
{\displaystyle\int\limits_{t}^{\delta_{+}^{T}(t_{0})}}
\Phi_{A}^{-1}(\sigma(u),t_{0})\left[  A(u)Q(u,x\left(  \delta_{-}(s,u)\right)
)+G\left(  u,x(u),x\left(  \delta_{-}(s,u)\right)  \right)  \right]  \Delta
u\right. \\
&  +\left.  \Phi_{A}^{-1}(\delta_{+}^{T}(t_{0}),t_{0})\left[
{\displaystyle\int\limits_{\delta_{+}^{T}(t_{0})}^{\delta_{+}^{T}(t)}}
\Phi_{A}^{-1}(\sigma(\delta_{-}^{T}(\hat{u})),t_{0})\left[  A(\delta_{-}%
^{T}(\hat{u}))Q(\delta_{-}^{T}(\hat{u}),x\left(  \delta_{-}(s,\delta_{-}%
^{T}(\hat{u}))\right)  )\right.  \right.  \right. \\
&  +\left.  \left.  \left[  G\left(  \delta_{-}^{T}(\hat{u}),x(\delta_{-}%
^{T}(\hat{u})),x\left(  \delta_{-}(s,\delta_{-}^{T}(\hat{u}))\right)  \right)
\right]  \delta_{-}^{\Delta T}(\hat{u})\Delta u\right]  \right]
\end{align*}
and%
\begin{align*}
x(t)  &  =Q(t,x\left(  \delta_{-}(s,t)\right)  )+\Phi_{A}(t,t_{0})\left(
\Phi_{A}^{-1}(\delta_{+}^{T}(t_{0}),t_{0})-I\right)  ^{-1}\\
&  \times\left[
{\displaystyle\int\limits_{t}^{\delta_{+}^{T}(t_{0})}}
\Phi_{A}^{-1}(\sigma(u),t_{0})\left[  A(u)Q(u,x\left(  \delta_{-}(s,u)\right)
)+G\left(  u,x(u),x\left(  \delta_{-}(s,u)\right)  \right)  \right]  \Delta
u\right. \\
&  +\left.  \Phi_{A}^{-1}(\delta_{+}^{T}(t_{0}),t_{0})%
{\displaystyle\int\limits_{\delta_{+}^{T}(t_{0})}^{\delta_{+}^{T}(t)}}
\Phi_{A}^{-1}(\sigma(\delta_{-}^{T}(\hat{u})),t_{0})\left[  A(\hat{u}%
)Q(\hat{u},x\left(  \delta_{-}(s,\hat{u})\right)  )+G\left(  \hat{u},x(\hat
{u}),x\left(  \delta_{-}(s,\hat{u})\right)  \right)  \right]  \Delta u\right]
.
\end{align*}
Since%
\[
\Phi_{A}^{-1}(\delta_{+}^{T}(t_{0}),t_{0})\Phi_{A}^{-1}(\sigma(\delta_{-}%
^{T}(\hat{u})),t_{0})=\Phi_{A}^{-1}(\delta_{+}^{T}(t_{0}),t_{0})\Phi_{A}%
^{-1}\left(  \delta_{-}^{T}(\sigma(\hat{u})),t_{0}\right)  ,
\]
we have%
\begin{align*}
\Phi_{A}(t_{0},\delta_{+}^{T}(t_{0}))\Phi_{A}\left(  t_{0},\delta_{-}%
^{T}(\sigma(\hat{u}))\right)   &  =\Phi_{A}(t_{0},\delta_{+}^{T}(t_{0}%
))\Phi_{A}\left(  \delta_{+}^{T}(t_{0}),\sigma(\hat{u})\right) \\
&  =\Phi_{A}(t_{0},\sigma(\hat{u})).
\end{align*}
Applying the last equality to the preceding one, we obtain%
\begin{align*}
x(t)  &  =Q(t,x\left(  \delta_{-}(s,t)\right)  )+\Phi_{A}(t,t_{0})\left(
\Phi_{A}^{-1}(\delta_{+}^{T}(t_{0}),t_{0})-I\right)  ^{-1}\\
&  \times\left[
{\displaystyle\int\limits_{t}^{\delta_{+}^{T}(t)}}
\Phi_{A}^{-1}(\sigma(u),t_{0})\left[  A(u)Q(u,x\left(  \delta_{-}(s,u)\right)
)+G\left(  u,x(u),x\left(  \delta_{-}(s,u)\right)  \right)  \right]  \Delta
u\right]  ,
\end{align*}
as desired.
\end{proof}

Next we state Krasnoselskii's fixed point theorem which we employ for showing
existence of a periodic solution.

\begin{theorem}
[Krasnoselskii]\label{fixpoint}Let $\mathbb{M}$ be a closed convex nonempty
subset of a Banach space $\left(  \mathbb{B},\left\Vert .\right\Vert \right)
.$ Suppose that $B$ and $C$ maps $\mathbb{M}$ into $\mathbb{B}$ such that

\begin{enumerate}
\item[(i)] $x,y\in\mathbb{M}$, implies $Bx+Cy\in\mathbb{M}$,

\item[(ii)] $C$ is compact and continuous,

\item[(iii)] $B$ is a contraction mapping.
\end{enumerate}

Then there exists $z\in\mathbb{M}$ with $z=Bz+Cz.$
\end{theorem}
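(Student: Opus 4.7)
The plan is to reduce the theorem to a combined application of Banach's contraction principle and Schauder's fixed point theorem, which is the standard route to Krasnoselskii. First, for each fixed $y \in \mathbb{M}$ I would consider the auxiliary map $T_{y} : \mathbb{M} \to \mathbb{M}$ defined by $T_{y}(x) = Bx + Cy$. Hypothesis (i) ensures $T_{y}$ actually takes values in $\mathbb{M}$, and by (iii) it is a strict contraction with Lipschitz constant $k < 1$ that is independent of $y$. Because $\mathbb{M}$ is a closed subset of the Banach space $\mathbb{B}$ and hence complete, Banach's fixed point theorem supplies a unique $\phi(y) \in \mathbb{M}$ satisfying $\phi(y) = B\phi(y) + Cy$; this produces a well-defined map $\phi : \mathbb{M} \to \mathbb{M}$.

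The next step would be to verify that $\phi$ is continuous and that $\phi(\mathbb{M})$ is relatively compact, so that Schauder's theorem applies to $\phi$ on the closed convex set $\mathbb{M}$. Subtracting the identities $\phi(y) = B\phi(y) + Cy$ and $\phi(y') = B\phi(y') + Cy'$ and using the contraction bound on $B$ gives
\[
\left\Vert \phi(y) - \phi(y') \right\Vert \leq \frac{1}{1-k}\left\Vert Cy - Cy' \right\Vert .
\]
Thus $\phi$ is continuous whenever $C$ is, and, interpreting the same inequality in reverse, $\phi$ factors as the composition of a $\frac{1}{1-k}$-Lipschitz map with the compact operator $C$. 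Since the Lipschitz image of a relatively compact set is relatively compact, $\phi(\mathbb{M})$ has compact closure in $\mathbb{M}$.

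Finally, Schauder's fixed point theorem applied to the continuous, compact self-map $\phi$ of the closed convex set $\mathbb{M}$ yields $z \in \mathbb{M}$ with $\phi(z) = z$; by the defining identity for $\phi$ this is exactly $z = Bz + Cz$. The only delicate point is that even though $I - B$ is not literally invertible on all of $\mathbb{B}$, the Banach step furnishes a right inverse on the range of $C$ that inherits a $\frac{1}{1-k}$-Lipschitz estimate from hypothesis (iii); that estimate is what transports the compactness assumption (ii) through to compactness of $\phi$, and it is the place where (ii) and (iii) interact in an essential way. Beyond this observation, no additional ingredient is required.
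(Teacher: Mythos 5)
Your proof is correct and is the classical argument for Krasnoselskii's theorem: Banach's contraction principle applied to $x\mapsto Bx+Cy$ on the complete set $\mathbb{M}$ (a self-map by hypothesis (i)), followed by Schauder's theorem for the resulting map $\phi$, whose continuity and the relative compactness of $\phi(\mathbb{M})$ you correctly extract from the estimate $\left\Vert \phi(y)-\phi(y')\right\Vert \leq (1-k)^{-1}\left\Vert Cy-Cy'\right\Vert$, which transports the compactness of $C$ through the implicitly defined right inverse of $I-B$ on the range of $C$. Note that the paper states this theorem without proof, importing it as a standard tool, so there is no internal argument to compare against; yours is the canonical proof and contains no gaps.
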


In preparation for the next result define the mapping $H$ by
\begin{equation}
\left(  H\varphi\right)  \left(  t\right)  =\left(  B\varphi\right)  \left(
t\right)  +\left(  C\varphi\right)  \left(  t\right)  , \label{H}%
\end{equation}
where
\begin{equation}
\left(  B\varphi\right)  (t):=Q(t,\varphi\left(  \delta_{-}(s,t)\right)  )
\label{5.13}%
\end{equation}
and%
\begin{align}
\left(  C\varphi\right)  (t)  &  :=\Phi_{A}(t,t_{0})\left(  \Phi_{A}%
^{-1}(\delta_{+}^{T}(t_{0}),t_{0})-I\right)  ^{-1}\nonumber\\
&  \times%
{\displaystyle\int\limits_{t}^{\delta_{+}^{T}(t)}}
\Phi_{A}^{-1}(\sigma(u),t_{0})\left[  A(u)Q(u,\varphi\left(  \delta
_{-}(s,u)\right)  )+G\left(  u,\varphi(u),\varphi\left(  \delta_{-}%
(s,u)\right)  \right)  \right]  \Delta u. \label{5.14}%
\end{align}

\begin{lemma}
\label{c-c}Suppose (\ref{5.2})-(\ref{5.4.1}) hold. Let $C$ be defined by
(\ref{5.14}). If there exist positive constants $E_{1},E_{2}$, $E_{3}$, and
$N$ such that%
\begin{equation}
\left\vert Q(t,x)-Q(t,y)\right\vert \leq E_{1}\left\Vert x-y\right\Vert ,
\label{5.5}%
\end{equation}%
\begin{equation}
\left\vert G(t,x,y)-G(t,z,w)\right\vert \leq E_{2}\left\Vert x-z\right\Vert
+E_{3}\left\Vert y-w\right\Vert , \label{5.6}%
\end{equation}
and%
\begin{equation}
r\left(  \delta_{+}^{T}(t_{0})-t_{0}\right)  \left(  \left\Vert A\right\Vert
E_{1}+E_{2}+E_{3}\right)  \leq N \label{N}%
\end{equation}
hold, then

\begin{enumerate}
\item[(i)] \label{part1}%
\[
\left\Vert \left(  C\varphi\right)  (.)\right\Vert \leq r\left(  \delta
_{+}^{T}(t_{0})-t_{0}\right)  \left\Vert A(.)Q(.,\varphi\left(  \delta
_{-}(s,.)\right)  )+G\left(  .,\varphi(.),\varphi\left(  \delta_{-}%
(s,.)\right)  \right)  \right\Vert ,
\]
where%
\begin{equation}
r=\max_{t\in\left[  t_{0},\delta_{+}^{T}(t_{0})\right]  _{\mathbb{T}}}\left(
\max_{u\in\left[  t,\delta_{+}^{T}(t)\right]  _{\mathbb{T}}}\left\vert \left[
\Phi_{A}(\sigma(u),t_{0})\left(  \Phi_{A}^{-1}(\delta_{+}^{T}(t_{0}%
),t_{0})-I\right)  \Phi_{A}^{-1}(t,t_{0})\right]  ^{-1}\right\vert \right)  .
\label{r}%
\end{equation}

\item[(ii)] $C$ is continuous and compact.
\end{enumerate}
\end{lemma}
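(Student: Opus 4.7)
The plan is to treat (i) as a direct pointwise estimate and then use it for both continuity (immediate) and compactness (the genuine obstacle) in (ii).

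For (i), I would pull the matrix factor through the integral in \eqref{5.14} by using the algebraic identity
\[
\Phi_A(t,t_0)\bigl(\Phi_A^{-1}(\delta_+^T(t_0),t_0)-I\bigr)^{-1}\Phi_A^{-1}(\sigma(u),t_0)=\bigl[\Phi_A(\sigma(u),t_0)\bigl(\Phi_A^{-1}(\delta_+^T(t_0),t_0)-I\bigr)\Phi_A^{-1}(t,t_0)\bigr]^{-1},
\]
whose right-hand side is exactly the matrix controlled by $r$ via \eqref{r} for every $t\in[t_0,\delta_+^T(t_0)]_{\mathbb{T}}$ and $u\in[t,\delta_+^T(t)]_{\mathbb{T}}$. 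Writing $\gamma(u):=A(u)Q(u,\varphi(\delta_-(s,u)))+G(u,\varphi(u),\varphi(\delta_-(s,u)))$, the periodicity hypotheses \eqref{5.2}--\eqref{5.4} together with $\varphi\in P_T$ force $\gamma$ (and hence $|\gamma|$, since $\delta_+^{\Delta T}>0$ by Lemma \ref{lem3.1}(8)) to be $\Delta$-periodic in shifts. The integral-invariance theorem of Subsection~1.1 then collapses
\[
\int_t^{\delta_+^T(t)}|\gamma(u)|\,\Delta u=\int_{t_0}^{\delta_+^T(t_0)}|\gamma(u)|\,\Delta u\leq(\delta_+^T(t_0)-t_0)\|\gamma\|,
\]
and (i) follows by combining the two bounds.

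Continuity of $C$ is then a direct corollary of (i) together with the Lipschitz bounds \eqref{5.5}--\eqref{5.6}: for $\varphi_1,\varphi_2\in P_T$,
\[
\|C\varphi_1-C\varphi_2\|\leq r(\delta_+^T(t_0)-t_0)(\|A\|E_1+E_2+E_3)\|\varphi_1-\varphi_2\|\leq N\|\varphi_1-\varphi_2\|,
\]
so $C$ is globally Lipschitz on $P_T$, in particular continuous.

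The main obstacle is compactness. For a bounded set $\mathbb{D}\subset P_T$, uniform boundedness of $C(\mathbb{D})$ is immediate from (i) and the continuity of $A$, $Q$, $G$ on the compact interval $[t_0,\delta_+^T(t_0)]_{\mathbb{T}}$. Equicontinuity is the delicate point, since the time scale is only periodic in shifts and need not be additive. I would establish it by $\Delta$-differentiating the representation \eqref{5.14} using the product rule, \eqref{5.9} for $(\Phi_A^{-1})^\Delta$, Corollary \ref{Cor 1} to rewrite $\sigma(\delta_+^T(t))=\delta_+^T(\sigma(t))$, and the $\Delta$-periodicity of $\gamma$ so that the boundary contributions from differentiating the variable-endpoint integral collapse to a uniformly bounded expression. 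A uniform bound on $\|(C\varphi)^\Delta\|$ over $\varphi\in\mathbb{D}$ then yields precompactness of $C(\mathbb{D})$ via a time-scale version of the Arzel\`a--Ascoli theorem, completing (ii).
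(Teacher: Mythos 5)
Your proposal is correct and follows essentially the same route as the paper's proof: the same matrix identity pulls the transition-matrix factors inside the integral so that (i) reduces to the bound by $r$, continuity follows from the identical Lipschitz estimate with $\delta=\varepsilon/N$, and compactness is obtained exactly as in the paper by $\Delta$-differentiating \eqref{5.14}, using the shift-periodicity to cancel the boundary terms down to $\left(C\varphi\right)^{\Delta}(t)=A(t)\left(C\varphi\right)(t)+A(t)Q(t,\varphi\left(\delta_{-}(s,t)\right))+G\left(t,\varphi(t),\varphi\left(\delta_{-}(s,t)\right)\right)$, and invoking Arzel\`a--Ascoli. Your explicit appeal to the integral-invariance theorem (noting $\delta_{+}^{\Delta T}>0$ so that $\left\vert \gamma\right\vert$ inherits $\Delta$-periodicity) to collapse $\int_{t}^{\delta_{+}^{T}(t)}$ to $\int_{t_{0}}^{\delta_{+}^{T}(t_{0})}$ merely makes precise a change of limits the paper performs silently.
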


\begin{proof}
Let $C$ be defined as in (\ref{5.14}). Then it can be written as in the
following form%
\begin{align*}
\left(  C\varphi\right)  (t)  &  =%
{\displaystyle\int\limits_{t}^{\delta_{+}^{T}(t)}}
\left[  \left[  \Phi_{A}(\sigma(u),t_{0})\left(  \Phi_{A}^{-1}(\delta_{+}%
^{T}(t_{0}),t_{0})-I\right)  \Phi_{A}^{-1}(t,t_{0})\right]  ^{-1}\right. \\
&  \times\left.  \left[  A(u)Q(u,\varphi\left(  \delta_{-}(s,u)\right)
)+G\left(  u,\varphi(u),\varphi\left(  \delta_{-}(s,u)\right)  \right)
\right]  \right]  \Delta u.
\end{align*}
Since $\left(  C\varphi\right)  (t)\in P_{T},$ we have%
\begin{align*}
\left\Vert \left(  C\varphi\right)  (.)\right\Vert  &  =\max_{t\in\left[
t_{0},\delta_{+}^{T}(t_{0})\right]  _{\mathbb{T}}}\left\vert
{\displaystyle\int\limits_{t}^{\delta_{+}^{T}(t)}}
\left[  \Phi_{A}(\sigma(u),t_{0})\left(  \Phi_{A}^{-1}(\delta_{+}^{T}%
(t_{0}),t_{0})-I\right)  \Phi_{A}^{-1}(t,t_{0})\right]  ^{-1}\left[
A(u)Q(u,\varphi\left(  \delta_{-}(s,u)\right)  )\right.  \right. \\
&  \left.  \left.  +G\left(  u,\varphi(u),\varphi\left(  \delta_{-}%
(s,u)\right)  \right)  \right]  \Delta u\right\vert \\
&  \leq\max_{t\in\left[  t_{0},\delta_{+}^{T}(t_{0})\right]  _{\mathbb{T}}%
}\left(  \max_{u\in\left[  t,\delta_{+}^{T}(t)\right]  _{\mathbb{T}}%
}\left\vert \left[  \Phi_{A}(\sigma(u),t_{0})\left(  \Phi_{A}^{-1}(\delta
_{+}^{T}(t_{0}),t_{0})-I\right)  \Phi_{A}^{-1}(t,t_{0})\right]  ^{-1}%
\right\vert \right) \\
&  \times\max_{t\in\left[  t_{0},\delta_{+}^{T}(t_{0})\right]  _{\mathbb{T}}}%
{\displaystyle\int\limits_{t_{0}}^{\delta_{+}^{T}(t_{0})}}
\left\vert A(u)Q(u,\varphi\left(  \delta_{-}(s,u)\right)  )+G\left(
u,\varphi(u),\varphi\left(  \delta_{-}(s,u)\right)  \right)  \right\vert
\Delta u\\
&  \leq r\left(  \delta_{+}^{T}(t_{0})-t_{0}\right)  \left\Vert
A(.)Q(.,\varphi\left(  \delta_{-}(s,.)\right)  )+G\left(  .,\varphi
(.),\varphi\left(  \delta_{-}(s,.)\right)  \right)  \right\Vert .
\end{align*}
This completes the proof of part (i).

To see that $C$ is continuous, suppose $\varphi$ and $\psi$ belong to $P_{T}.$
Given $\varepsilon>0$ there exists a $\delta>0$ such that $\left\Vert
\varphi-\psi\right\Vert $ $<\delta$ implies%
\begin{align*}
\left\Vert (C\varphi)(.)-(C\psi)(.)\right\Vert  &  \leq r%
{\displaystyle\int\limits_{t_{0}}^{\delta_{+}^{T}(t_{0})}}
\left[  \left\Vert A\right\Vert E_{1}\left\Vert \varphi-\psi\right\Vert
+(E_{2}+E_{3})\left\Vert \varphi-\psi\right\Vert \right]  \Delta u\\
&  \leq r\left(  \delta_{+}^{T}(t_{0})-t_{0}\right)  \left(  \left\Vert
A\right\Vert E_{1}+E_{2}+E_{3}\right)  \left\Vert \varphi-\psi\right\Vert
<\varepsilon.
\end{align*}
By choosing $\delta=\varepsilon/N,$ we prove that $C$ is continuous.

In order to show that $C$ is compact, we consider the set $D:=\left\{
\varphi\in P_{T}:\left\Vert \varphi\right\Vert \leq R\right\}  $ for a
positive fixed constant $R.$ Consider a sequence of $T$-periodic functions in
shifts, $\left\{  \varphi_{n}\right\}  ,$ and assume that $\left\{
\varphi_{n}\right\}  \in D.$ Moreover, from (\ref{5.5}) and (\ref{5.6}) we get%
\begin{align*}
\left\vert Q(t,x)\right\vert  &  =\left\vert Q(t,x)-Q(t,0)+Q(t,0)\right\vert
\\
&  \leq\left\vert Q(t,x)-Q(t,0)\right\vert +\left\vert Q(t,0)\right\vert \\
&  \leq E_{1}\left\Vert x\right\Vert +\alpha
\end{align*}
and%
\begin{align*}
\left\vert G(t,x,y)\right\vert  &  =\left\vert
G(t,x,y)-G(t,0,0)+G(t,0,0)\right\vert \\
&  \leq\left\vert G(t,x,y)-G(t,0,0)\right\vert +\left\vert G(t,0,0)\right\vert
\\
&  \leq E_{2}\left\Vert x\right\Vert +E_{3}\left\Vert y\right\Vert +\beta,
\end{align*}
where $\alpha=\left\vert Q(t,0)\right\vert $ and $\beta=\left\vert
G(t,0,0)\right\vert .$ If we consider $\left\Vert \left(  C\varphi_{n}\right)
(.)\right\Vert ,$ we have%
\[
\left\Vert \left(  C\varphi_{n}\right)  (.)\right\Vert \leq r\left(
\delta_{+}^{T}(t_{0})-t_{0}\right)  \left[  \left\Vert A\right\Vert \left(
E_{1}\left\Vert \varphi_{n}\right\Vert +\alpha\right)  +\left(  E_{2}%
+E_{3}\right)  \left\Vert \varphi_{n}\right\Vert +\beta\right]  ,
\]
where $r$ is as in (\ref{r}). Since $\left\{  \varphi_{n}\right\}  \in D,$ we
obtain $\left\Vert \left(  C\varphi_{n}\right)  (.)\right\Vert \leq L,$ where
\[
L=r\left(  \delta_{+}^{T}(t_{0})-t_{0}\right)  \left[  \left\Vert A\right\Vert
\left(  E_{1}\left\Vert \varphi_{n}\right\Vert +\alpha\right)  +\left(
E_{2}+E_{3}\right)  \left\Vert \varphi_{n}\right\Vert +\beta\right]  .
\]

Now, we evaluate $\left(  C\varphi_{n}\right)  ^{\Delta}(t)$ and show that
$\left(  C\varphi_{n}\right)  $ is uniformly bounded.%
\begin{align*}
\left(  C\varphi_{n}\right)  ^{\Delta}(t)  &  =\Phi_{A}^{\Delta}%
(t,t_{0})\left(  \Phi_{A}^{-1}(\delta_{+}^{T}(t_{0}),t_{0})-I\right)  ^{-1}\\
&  \times%
{\displaystyle\int\limits_{t}^{\delta_{+}^{T}(t)}}
\Phi_{A}^{-1}(\sigma(u),t_{0})\left[  A(u)Q(u,\varphi_{n}\left(  \delta
_{-}(s,u)\right)  )+G\left(  u,\varphi_{n}(u),\varphi_{n}\left(  \delta
_{-}(s,u)\right)  \right)  \right]  \Delta u\\
&  +\Phi_{A}(\sigma\left(  t\right)  ,t_{0})\left(  \Phi_{A}^{-1}(\delta
_{+}^{T}(t_{0}),t_{0})-I\right)  ^{-1}\\
&  \times\left[  \Phi_{A}^{-1}(\sigma(\delta_{+}^{T}(t)),t_{0})\left[
A(\delta_{+}^{T}(t))Q(\delta_{+}^{T}(t),\varphi_{n}\left(  \delta_{-}%
(s,\delta_{+}^{T}(t))\right)  )\right.  \right. \\
&  +\left.  \left.  G\left(  \delta_{+}^{T}(t),\varphi_{n}(\delta_{+}%
^{T}(t)),\varphi_{n}\left(  \delta_{-}(s,\delta_{+}^{T}(t))\right)  \right)
\right]  \delta_{+}^{\Delta T}(t)\right. \\
&  -\left.  \Phi_{A}^{-1}(\sigma(t),t_{0})\left[  A(t)Q(t,\varphi_{n}\left(
\delta_{-}(s,t)\right)  )+G\left(  t,\varphi_{n}(t),\varphi_{n}\left(
\delta_{-}(s,t)\right)  \right)  \right]  \right]  .
\end{align*}
This along with (\ref{5.2}-\ref{5.4}) and
\[
\Phi_{A}^{\Delta}(t,t_{0})=A(t)\Phi_{A}(t,t_{0}),
\]
implies%
\begin{align}
\left(  C\varphi_{n}\right)  ^{\Delta}(t)  &  =A(t)\left(  C\varphi
_{n}\right)  (t)+\Phi_{A}(\sigma\left(  t\right)  ,t_{0})\left(  \Phi_{A}%
^{-1}(\delta_{+}^{T}(t_{0}),t_{0})-I\right)  ^{-1}\nonumber\\
&  \times\left[  \left(  \Phi_{A}^{-1}(\sigma\left(  \delta_{+}^{T}(t)\right)
,t_{0})-\Phi_{A}^{-1}(\sigma(t),t_{0})\right)  \left[  A(t)Q(t,\varphi
_{n}\left(  \delta_{-}(s,t)\right)  )+G\left(  t,\varphi_{n}(t),\varphi
_{n}\left(  \delta_{-}(s,t)\right)  \right)  \right]  \right]  . \label{5.15}%
\end{align}
Substituting%
\[
\Phi_{A}^{-1}(\sigma\left(  \delta_{+}^{T}(t)\right)  ,t_{0})-\Phi_{A}%
^{-1}(\sigma(t),t_{0})=\left(  \Phi_{A}^{-1}(\delta_{+}^{T}(t_{0}%
),t_{0})-I\right)  \Phi_{A}^{-1}(\sigma\left(  t\right)  ,t_{0})
\]
in (\ref{5.15}), we obtain%
\[
\left(  C\varphi_{n}\right)  ^{\Delta}(t)=A(t)\left(  C\varphi_{n}\right)
(t)+A(t)Q(t,\varphi_{n}\left(  \delta_{-}(s,t)\right)  )+G\left(
t,\varphi_{n}(t),\varphi_{n}\left(  \delta_{-}(s,t)\right)  \right)  .
\]
Thus, $\left(  C\varphi_{n}\right)  ^{\Delta}(t)$ is bounded. This means that
$\left(  C\varphi_{n}\right)  $ is uniformly bounded and equicontinuous. Hence
by Arzela-Ascoli theorem $C(D)$ is compact.
\end{proof}

\begin{lemma}
\label{contraction}Let $B$ be given by (\ref{5.13}). If (\ref{5.5}) holds with
$E_{1}<\zeta<1$, then $B$ is a contraction.
\end{lemma}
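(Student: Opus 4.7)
The plan is to handle this lemma in two short moves, since condition (\ref{5.5}) is essentially tailored to give the contraction estimate after one trivial reduction.

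First I would check that $B$ maps $P_T$ into itself, which is a prerequisite for Krasnoselskii's setup and is needed to make sense of the contraction condition on the Banach space $(P_T,\|\cdot\|)$. Given $\varphi\in P_T$, I apply assumption (\ref{5.3}) with $x=\varphi$ to obtain
\[
(B\varphi)(\delta_\pm^T(t)) = Q(\delta_\pm^T(t),\varphi(\delta_-(s,\delta_\pm^T(t)))) = Q(t,\varphi(\delta_-(s,t))) = (B\varphi)(t),
\]
so $B\varphi$ is periodic in shifts with period $T$. Continuity in $t$ follows from $Q\in C_{rd}$ and the continuity of $\delta_-(s,\cdot)$, so indeed $B\varphi\in P_T$.

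Second I would bound $\|B\varphi-B\psi\|$ for $\varphi,\psi\in P_T$. By the definition of the sup norm on $P_T$ and the Lipschitz hypothesis (\ref{5.5}),
\[
\|B\varphi-B\psi\| = \max_{t\in[t_0,\delta_+^T(t_0)]_{\mathbb{T}}}\bigl|Q(t,\varphi(\delta_-(s,t)))-Q(t,\psi(\delta_-(s,t)))\bigr| \leq E_1\,\|\varphi-\psi\|.
\]
Since $E_1<\zeta<1$ by hypothesis, this yields $\|B\varphi-B\psi\|\leq\zeta\|\varphi-\psi\|$ with contraction constant $\zeta\in(0,1)$, so $B$ is a contraction on $P_T$.

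There is no real obstacle here; the only subtlety worth double-checking is that the argument $\varphi(\delta_-(s,t))$ lies in the domain of the second slot of $Q$ as $t$ ranges over $[t_0,\delta_+^T(t_0)]_{\mathbb{T}}$, which is guaranteed because $\delta_-(s,\cdot)$ is defined on $\mathcal{D}_-$ and $\varphi\in P_T$ takes values in $\mathbb{R}^n$, matching the domain of $Q$ as declared in the paragraph preceding (\ref{5.2}). The Lipschitz constant appearing in (\ref{5.5}) is taken with respect to the norm $\|\cdot\|$ on $P_T$, so the reduction from the pointwise inequality to the norm inequality is immediate rather than requiring any Gronwall-type step.
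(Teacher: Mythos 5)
Your proof is correct and follows essentially the same route as the paper's: apply the Lipschitz hypothesis (\ref{5.5}) pointwise, take the maximum over $\left[t_0,\delta_+^T(t_0)\right]_{\mathbb{T}}$, and conclude $\left\Vert B\varphi - B\psi\right\Vert \leq E_1\left\Vert\varphi-\psi\right\Vert < \zeta\left\Vert\varphi-\psi\right\Vert$. Your preliminary check that $B$ maps $P_T$ into itself via the periodicity assumption (\ref{5.3}) is a small addition the paper leaves implicit, but it does not change the argument.
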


\begin{proof}
Let $B$ be defined by (\ref{5.13}). Then for $\varphi,\psi\in P_{T},$ we have%
\begin{align*}
\left\Vert \left(  B\varphi\right)  (.)-\left(  B\psi\right)  (.)\right\Vert
&  =\max_{t\in\left[  t_{0},\delta_{+}^{T}(t_{0})\right]  _{\mathbb{T}}%
}\left\vert \left(  B\varphi\right)  (t)-\left(  B\psi\right)  (t)\right\vert
\\
&  =\max_{t\in\left[  t_{0},\delta_{+}^{T}(t_{0})\right]  _{\mathbb{T}}%
}\left\vert Q(t,\varphi\left(  \delta_{-}(s,t)\right)  )-Q(t,\psi\left(
\delta_{-}(s,t)\right)  )\right\vert \\
&  \leq E_{1}\left\Vert \varphi-\psi\right\Vert \\
&  <\zeta\left\Vert \varphi-\psi\right\Vert .
\end{align*}
This shows $B$ is a contraction mapping with contraction constant $\zeta.$
\end{proof}

\begin{theorem}
\label{alpha-beta}Assume that all hypothesis of Lemma \ref{c-c} are satisfied.
Let $r$ be given by (\ref{r}), $\alpha:=\left\Vert Q(t,0)\right\Vert $ and
$\beta:=\left\Vert G(t,0,0)\right\Vert .$ Let $J$ be a positive constant
satisfying the inequality%
\[
E_{1}J+\alpha+r\left(  \delta_{+}^{T}(t_{0})-t_{0}\right)  \left[  \left\Vert
A\right\Vert (\alpha+E_{1}J)+(E_{2}+E_{3})J+\beta\right]  \leq J.
\]
Then the equation (\ref{5.1}) has a solution in $\mathbb{M}:=\left\{
\varphi\in P_{T}:\left\Vert \varphi\right\Vert \leq J\right\}  .$
\end{theorem}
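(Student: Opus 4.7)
The plan is to apply Krasnoselskii's fixed point theorem (Theorem \ref{fixpoint}) to the map $H = B + C$ defined in (\ref{H}), on the closed, convex, nonempty subset $\mathbb{M} = \{\varphi \in P_T : \|\varphi\| \leq J\}$ of the Banach space $(P_T, \|\cdot\|)$. By Lemma \ref{lem5.1}, every fixed point of $H$ in $P_T$ is precisely a $T$-periodic in shifts solution of (\ref{5.1}), so it suffices to produce such a fixed point.

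Of the three hypotheses of Krasnoselskii's theorem, conditions (ii) and (iii) are already at hand: Lemma \ref{c-c} yields that $C$ is continuous and compact on $P_T$, and Lemma \ref{contraction} (invoked together with the implicit assumption $E_1 < \zeta < 1$, which is needed for Krasnoselskii to apply) shows that $B$ is a contraction. The only nontrivial remaining task is verifying the self-mapping condition (i): for all $\varphi, \psi \in \mathbb{M}$, one has $B\varphi + C\psi \in \mathbb{M}$, i.e.\ $\|B\varphi + C\psi\| \leq J$.

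To check this, I would bound the two summands separately. For the contraction part, the Lipschitz bound (\ref{5.5}) gives
\[
|(B\varphi)(t)| = |Q(t,\varphi(\delta_-(s,t)))| \leq |Q(t,\varphi(\delta_-(s,t))) - Q(t,0)| + |Q(t,0)| \leq E_1 \|\varphi\| + \alpha \leq E_1 J + \alpha.
\]
For the compact part, I would apply part (i) of Lemma \ref{c-c} followed by the decomposition $|Q(t,x)| \leq E_1\|x\| + \alpha$ and $|G(t,x,y)| \leq E_2\|x\| + E_3\|y\| + \beta$ (derived in the proof of Lemma \ref{c-c}) to get
\[
\|(C\psi)(.)\| \leq r\bigl(\delta_+^T(t_0) - t_0\bigr)\bigl[\|A\|(E_1 J + \alpha) + (E_2 + E_3)J + \beta\bigr].
\]
Summing these two estimates and invoking the hypothesis on $J$,
\[
E_1 J + \alpha + r\bigl(\delta_+^T(t_0) - t_0\bigr)\bigl[\|A\|(\alpha + E_1 J) + (E_2 + E_3)J + \beta\bigr] \leq J,
\]
yields $\|B\varphi + C\psi\| \leq J$, so $B\varphi + C\psi \in \mathbb{M}$ as required.

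All three hypotheses of Theorem \ref{fixpoint} are then in place, so there exists $z \in \mathbb{M}$ with $z = Bz + Cz = Hz$. By Lemma \ref{lem5.1} this $z$ is the sought $T$-periodic in shifts solution of (\ref{5.1}). The proof is essentially a bookkeeping exercise: the real work has already been done in Lemmas \ref{lem5.1}, \ref{c-c}, and \ref{contraction}. The only ``obstacle'' is purely notational, namely keeping the triangle-inequality estimates lined up with the precise inequality imposed on $J$, and making sure the Lipschitz-to-growth reduction is applied consistently to both $Q$ and $G$ inside the integrand defining $C$.
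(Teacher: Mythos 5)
Your proposal is correct and follows essentially the same route as the paper: bound $\left\Vert B\varphi\right\Vert \leq E_{1}J+\alpha$ via the Lipschitz condition (\ref{5.5}), bound $\left\Vert C\psi\right\Vert$ via Lemma \ref{c-c} and the growth estimates on $Q$ and $G$, sum the two, invoke the hypothesis on $J$ for the self-mapping condition, and conclude with Krasnoselskii's theorem. Your explicit remark that $E_{1}<\zeta<1$ is needed for Lemma \ref{contraction} is a fair point of care that the paper's proof glosses over, but it does not change the argument.
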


\begin{proof}
By Lemma \ref{c-c}, $C$ is continuous and compact. Also $B$ is a contraction
of $P_{T}.$ Now, we have to show that $\left\Vert B\psi+C\varphi\right\Vert
\leq J$ for $\varphi,\psi\in\mathbb{M}.$ Take $\varphi$ and $\psi$ from
$\mathbb{M}$ then%
\begin{align*}
\left\Vert B\psi(.)+C\varphi(.)\right\Vert  &  \leq E_{1}\left\Vert
\psi\right\Vert +\alpha+r%
{\displaystyle\int\limits_{t_{0}}^{\delta_{+}^{T}(t_{0})}}
\left[  \left\Vert A\right\Vert \left(  \alpha+E_{1}\left\Vert \varphi
\right\Vert \right)  +(E_{2}+E_{3})\left\Vert \varphi\right\Vert
+\beta\right]  \Delta u\\
&  \leq E_{1}J+\alpha+r\left(  \delta_{+}^{T}(t_{0})-t_{0}\right)  \left[
\left\Vert A\right\Vert (\alpha+E_{1}J)+(E_{2}+E_{3})J+\beta\right] \\
&  \leq J.
\end{align*}
By Krasnoselskii's theorem, there exists a fixed point $z\in$ $\mathbb{M}$
such that $z=Bz+Cz$. This fixed point is also a $T$-periodic solution of
(\ref{5.1}) in shifts $\delta_{\pm}$. The proof is complete.
\end{proof}

\begin{theorem}
In addition to all hypothesis of Lemma \ref{c-c} suppose also that%
\[
E_{1}+r\left(  \delta_{+}^{T}(t_{0})-t_{0}\right)  \left(  \left\Vert
A\right\Vert E_{1}+E_{2}+E_{3}\right)  <1.
\]
Then the equation (\ref{5.1}) has a unique solution which is $T$-periodic in shifts.
\end{theorem}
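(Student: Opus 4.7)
The plan is to upgrade the Krasnoselskii argument of Theorem \ref{alpha-beta} to a direct application of the Banach contraction principle on the full map $H=B+C$ defined by (\ref{H})--(\ref{5.14}). Since, by Lemma \ref{lem5.1}, the fixed points of $H$ in $P_T$ are exactly the $T$-periodic solutions in shifts of (\ref{5.1}), uniqueness of the fixed point translates directly into uniqueness of the periodic solution.

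First I would observe that existence is already delivered by Theorem \ref{alpha-beta}, since all hypotheses of Lemma \ref{c-c} are assumed. So the genuinely new content is uniqueness, and the natural route is to show that the strengthened hypothesis is precisely what makes $H$ a contraction on the Banach space $(P_T,\|\cdot\|)$.

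The key step is to combine the contraction bound for $B$ obtained in Lemma \ref{contraction} with the Lipschitz-in-$\varphi$ bound for $C$ that is already implicit in the continuity computation of Lemma \ref{c-c}(ii). Concretely, for $\varphi,\psi\in P_T$ one writes
\begin{align*}
\|H\varphi-H\psi\| &\le \|B\varphi-B\psi\|+\|C\varphi-C\psi\|\\
&\le E_1\|\varphi-\psi\|+r\bigl(\delta_+^T(t_0)-t_0\bigr)\bigl(\|A\|E_1+E_2+E_3\bigr)\|\varphi-\psi\|,
\end{align*}
where the first term is Lemma \ref{contraction} and the second is obtained by bounding the integrand in (\ref{5.14}) using (\ref{5.5})--(\ref{5.6}) as in the proof of Lemma \ref{c-c}(ii). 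The new hypothesis is exactly that the resulting constant $E_1+r(\delta_+^T(t_0)-t_0)(\|A\|E_1+E_2+E_3)$ is strictly less than $1$, so $H$ is a contraction of $P_T$ into itself.

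The main obstacle here is conceptual rather than computational: one has to recognise that the estimate used inside Lemma \ref{c-c} to prove continuity of $C$ is in fact a Lipschitz bound with an explicit rate, and that adding this rate to $E_1$ reproduces the hypothesis of the theorem. Once that is noted, applying the Banach fixed point theorem yields a unique fixed point of $H$ in $P_T$. Equivalently, given any two $T$-periodic solutions $x,y$ of (\ref{5.1}), Lemma \ref{lem5.1} gives $x=Hx$ and $y=Hy$, whence the estimate above forces $\|x-y\|\le c\|x-y\|$ with $c<1$ and therefore $x=y$. Combined with existence from Theorem \ref{alpha-beta}, this produces the unique $T$-periodic solution in shifts $\delta_\pm$ claimed by the theorem.
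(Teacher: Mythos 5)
Your proposal is correct and follows essentially the same route as the paper: the paper's proof likewise applies the contraction mapping principle to $H=B+C$ on $P_T$, using exactly the estimate $\left\Vert H\varphi-H\psi\right\Vert \leq\left[E_{1}+r\left(\delta_{+}^{T}(t_{0})-t_{0}\right)\left(\left\Vert A\right\Vert E_{1}+E_{2}+E_{3}\right)\right]\left\Vert \varphi-\psi\right\Vert$ that you assemble from Lemma \ref{contraction} and the Lipschitz bound implicit in Lemma \ref{c-c}. Your only cosmetic deviation is invoking Theorem \ref{alpha-beta} for existence, which is unnecessary since Banach's theorem already yields both existence and uniqueness of the fixed point.
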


\begin{proof}
Let the mapping $H$ is defined by (\ref{H}) and $\varphi,\psi\in P_{T}.$ Since%
\[
\left\Vert \left(  H\varphi\right)  \left(  .\right)  -\left(  H\psi\right)
\left(  .\right)  \right\Vert \leq\left[  E_{1}+r\left(  \delta_{+}^{T}%
(t_{0})-t_{0}\right)  \left(  \left\Vert A\right\Vert E_{1}+E_{2}%
+E_{3}\right)  \right]  \left\Vert \varphi-\psi\right\Vert
\]
the proof follows from contraction mapping principle. The proof is complete.
\end{proof}

The following result is a generalization of \cite[Corollary 2.7]{4} and
\cite[Corollary 2.8]{5}.

\begin{corollary}
\label{cor.3}Assume that (\ref{5.2})-(\ref{5.4.1}) hold. Let $\alpha$ and
$\beta$ be the constants as in Theorem \ref{alpha-beta}. Suppose that there
exist positive constants $E_{1}^{\ast},E_{2}^{\ast}$ and $E_{3}^{\ast}$ such
that%
\begin{equation}
\left\vert Q(t,x)-Q(t,y)\right\vert \leq E_{1}^{\ast}\left\Vert x-y\right\Vert
, \label{E1*}%
\end{equation}%
\begin{equation}
\left\vert G(t,x,y)-G(t,z,w)\right\vert \leq E_{2}^{\ast}\left\Vert
x-z\right\Vert +E_{3}^{\ast}\left\Vert y-w\right\Vert \label{E2 *E3*}%
\end{equation}
and%
\begin{equation}
E_{1}^{\ast}J+\alpha+r\left(  \delta_{+}^{T}(t_{0})-t_{0}\right)  \left[
\left\Vert A\right\Vert (\alpha+E_{1}^{\ast}J)+(E_{2}^{\ast}+E_{3}^{\ast
})J+\beta\right]  \leq J \label{inq}%
\end{equation}
holds for all $x,y,z,$ and $w\in\mathbb{M}$. Then (\ref{5.1}) has a solution
in $\mathbb{M}$. Moreover, if%
\[
E_{1}^{\ast}+r\left(  \delta_{+}^{T}(t_{0})-t_{0}\right)  \left(  \left\Vert
A\right\Vert E_{1}^{\ast}+E_{2}^{\ast}+E_{3}^{\ast}\right)  <1,
\]
then the solution in $\mathbb{M}$ is unique.
\end{corollary}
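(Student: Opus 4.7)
The plan is to adapt the proofs of Theorem~\ref{alpha-beta} and its uniqueness counterpart to the present local setting, where the Lipschitz bounds (\ref{E1*})--(\ref{E2 *E3*}) are assumed only on $\mathbb{M}$. I would keep the decomposition $H=B+C$ from (\ref{H})--(\ref{5.14}) and verify Krasnoselskii's hypotheses on $\mathbb{M}$, which is a closed convex nonempty subset of $(P_T,\|\cdot\|)$.

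First, applying (\ref{E1*}) with the second argument set to $0$ and (\ref{E2 *E3*}) with the third and fourth arguments set to $0$ yields the linear growth estimates $|Q(t,x)|\le E_1^\ast|x|+\alpha$ and $|G(t,x,y)|\le E_2^\ast|x|+E_3^\ast|y|+\beta$ whenever $|x|,|y|\le J$, exactly as in the proof of Lemma~\ref{c-c}. Using these together with part (i) of Lemma~\ref{c-c} (applied with $E_i^\ast$ in place of $E_i$), I estimate, for $\varphi,\psi\in\mathbb{M}$,
\[
\|B\psi+C\varphi\|\le E_1^\ast J+\alpha+r\bigl(\delta_+^T(t_0)-t_0\bigr)\bigl[\|A\|(\alpha+E_1^\ast J)+(E_2^\ast+E_3^\ast)J+\beta\bigr],
\]
which is at most $J$ by (\ref{inq}); hence $B\psi+C\varphi\in\mathbb{M}$. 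The proofs that $C$ is continuous and compact on $\mathbb{M}$ follow line-by-line from Lemma~\ref{c-c}(ii): because every evaluation of $Q$ and $G$ inside $C\varphi$ uses an argument of norm at most $J$, the local Lipschitz inequalities are available, and Arzel\`a--Ascoli applies after the uniform bound on $(C\varphi)^\Delta$ is verified as in Lemma~\ref{c-c}. For the contraction property of $B$, since (\ref{inq}) forces $E_1^\ast<1$ except in the degenerate configuration ruled out by (\ref{5.4.1}), Lemma~\ref{contraction} gives $\|B\psi-B\varphi\|\le E_1^\ast\|\psi-\varphi\|$ on $\mathbb{M}$. Krasnoselskii's Theorem~\ref{fixpoint} then produces a fixed point $z\in\mathbb{M}$ of $H$, which by Lemma~\ref{lem5.1} is a $T$-periodic solution of (\ref{5.1}).

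For the uniqueness claim, the strengthened hypothesis $E_1^\ast+r(\delta_+^T(t_0)-t_0)(\|A\|E_1^\ast+E_2^\ast+E_3^\ast)<1$ together with (\ref{E1*})--(\ref{E2 *E3*}) yields, by the same estimate as in the unnamed uniqueness theorem preceding the corollary,
\[
\|H\varphi-H\psi\|\le\bigl[E_1^\ast+r(\delta_+^T(t_0)-t_0)(\|A\|E_1^\ast+E_2^\ast+E_3^\ast)\bigr]\|\varphi-\psi\|,\qquad\varphi,\psi\in\mathbb{M},
\]
so $H$ is a strict contraction of the complete metric space $\mathbb{M}$, and Banach's fixed point theorem yields the unique $T$-periodic solution in $\mathbb{M}$.

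The main obstacle is essentially bookkeeping: since $B\psi+C\varphi\in\mathbb{M}$ for all $\varphi,\psi\in\mathbb{M}$, every argument of $Q$ and $G$ appearing in the estimates lies within the value range of functions in $\mathbb{M}$, so the local hypotheses on $\mathbb{M}$ do legitimately replace the global ones used in Lemma~\ref{c-c} and Theorem~\ref{alpha-beta}. The only delicate point is reading (\ref{E1*})--(\ref{E2 *E3*}) as pointwise Lipschitz estimates on the range of $\mathbb{M}$, which is exactly how they are used in the earlier proofs.
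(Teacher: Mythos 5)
Your proof is correct and follows the route the paper intends: the corollary is stated there without proof, as the immediate localization to $\mathbb{M}$ of Lemma \ref{c-c}, Lemma \ref{contraction}, Theorem \ref{alpha-beta}, and the unnamed uniqueness theorem with $E_{i}^{\ast}$ in place of $E_{i}$, which is precisely what you verify (including the key point that $H\varphi=B\varphi+C\varphi\in\mathbb{M}$, so Banach's theorem applies on the complete metric space $\mathbb{M}$). The one slip is your appeal to (\ref{5.4.1}) to obtain $E_{1}^{\ast}<1$: that condition only guarantees nontriviality of the solution and plays no role here; the strict inequality instead follows directly from (\ref{inq}), since $r>0$, $\delta_{+}^{T}(t_{0})>t_{0}$, $J>0$, and the positivity of $E_{2}^{\ast}+E_{3}^{\ast}$ force $E_{1}^{\ast}J<J$.
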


The following example illustrates our existence results.

\begin{example}
Consider the time scale $\mathbb{T=}\left\{  2^{n}:n\in\mathbb{Z}\right\}
\cup\left\{  0\right\}  $, which is $2$-periodic in shifts $\delta_{\pm
}(s,t)=s^{\pm1}t$ associated with the initial point $t_{0}=1$. For a positive
constant $J$ define the set $\mathbb{M}_{J}$ by
\[
\mathbb{M}_{J}:=\left\{  \varphi\in P_{2}:\left\Vert \varphi\right\Vert \leq
J\right\}  \text{,}%
\]
where $P_{2}$ is the set of $2-$periodic functions in shifts $\delta_{\pm
}(s,t)=s^{\pm1}t$. Substituting%
\[
A\left(  t\right)  =\left[
\begin{array}
[c]{cc}%
\frac{1}{t} & 0\\
0 & \frac{1}{t}%
\end{array}
\right]  ,
\]%
\[
Q(t,u)=\left[
\begin{array}
[c]{c}%
\frac{1}{8}\left(  (-1)^{\frac{\ln t}{\ln\sqrt{2}}}+u\right)  \\
0
\end{array}
\right]  ,
\]
and%
\[
G(t,u,v)=\left[
\begin{array}
[c]{c}%
\frac{1}{8t}\sin\left(  \frac{\ln t}{\ln\sqrt{2}}\pi\right)  u\\
0
\end{array}
\right]  ,
\]
into (\ref{5.1}) we obtain the dynamic system
\begin{equation}
x^{\Delta}\left(  t\right)  =\left[
\begin{array}
[c]{cc}%
\frac{1}{t} & 0\\
0 & \frac{1}{t}%
\end{array}
\right]  x(t)+\left[
\begin{array}
[c]{c}%
\frac{1}{8}\left(  (-1)^{\frac{\ln t}{\ln\sqrt{2}}}+x(\delta_{-}(s,t))\right)
\\
0
\end{array}
\right]  ^{\Delta}+\left[
\begin{array}
[c]{c}%
\frac{1}{8t}\sin\left(  \frac{\ln t}{\ln\sqrt{2}}\pi\right)  x(t)\\
0
\end{array}
\right]  .\label{ex}%
\end{equation}
One may easily verify that (\ref{5.2})-(\ref{5.4.1}) hold for all $x\in P_{2}$
and all $t\in\mathbb{T}^{\ast}=\left\{  2^{n}:n\in\mathbb{Z}\right\}  $.
\newline Similar to \cite[Example 6]{2}, one may conclude that%
\[
\Phi_{A}\left(  \delta_{+}^{2}\left(  1\right)  ,1\right)  =\left[
\begin{array}
[c]{cc}%
2 & 0\\
0 & 2
\end{array}
\right]  ,
\]
which along with Theorem \ref{thm3} shows that the homogeneous system%
\[
x^{\Delta}\left(  t\right)  =\left[
\begin{array}
[c]{cc}%
\frac{1}{t} & 0\\
0 & \frac{1}{t}%
\end{array}
\right]  x(t)
\]
has no periodic solution in shifts. For $\varphi,\psi\in\mathbb{M}$, we have%
\begin{align*}
\left\vert Q(t,\varphi(\delta_{-}(s,t)))-Q(t,\psi(\delta_{-}%
(s,t)))\right\vert  &  \leq\frac{1}{8}\max_{t\in\left[  1,4\right]
_{\mathbb{T}}}\left\vert \left(  \varphi(\delta_{-}(s,t))-\psi(\delta
_{-}(s,t))\right)  \right\vert \\
&  \leq\frac{1}{8}\left\Vert \varphi-\psi\right\Vert
\end{align*}
which shows that (\ref{E1*}) holds for $E_{1}^{\ast}=\frac{1}{8}$. For
$\varphi,\psi\in\mathbb{M}$ we also have%
\begin{align*}
\left\vert G(t,\varphi(t),\varphi(\delta_{-}(s,t)))-G(t,\psi(t),\psi
(\delta_{-}(s,t)))\right\vert  &  \leq\frac{1}{8}\max_{t\in\left[  1,2\right]
_{\mathbb{T}}}\left\vert \frac{1}{t}\sin\left(  \frac{\ln t}{\ln\sqrt{2}}%
\pi\right)  \left(  \varphi(t)-\psi(t)\right)  \right\vert \\
&  \leq\frac{1}{8}\left\Vert \varphi-\psi\right\Vert
\end{align*}
Thus, (\ref{E2 *E3*}) holds for $E_{2}^{\ast}=\frac{1}{8}$ and $E_{3}^{\ast
}=0$. Hence, for such $E_{1}^{\ast}$, $E_{2}^{\ast}$, and $E_{3}^{\ast}$ the
inequality (\ref{inq}) turns into%
\begin{equation}
\frac{2}{5}\leq J\label{res.inq.}%
\end{equation}
since $\alpha=\frac{1}{8}$, $\beta=0$, $\left\Vert A\right\Vert =1,$ and%
\begin{align*}
r &  =\max_{t\in\left[  1,2\right]  _{\mathbb{T}}}\left(  \max_{u\in\left[
t,2t\right]  _{\mathbb{T}}}\left\vert \left[  \Phi_{A}(\sigma(u),t_{0})\left(
\Phi_{A}^{-1}(\delta_{+}^{T}(1),1)-I\right)  \Phi_{A}^{-1}(t,t_{0})\right]
^{-1}\right\vert \right)  \\
&  =\max_{t\in\left[  1,2\right]  _{\mathbb{T}}}\left(  \max_{u\in\left[
t,2t\right]  _{\mathbb{T}}}\left\vert \left[  -2^{\frac{\ln u}{\ln2}}\Phi
_{A}^{-1}(t,t_{0})\right]  ^{-1}\right\vert \right)  \\
&  \leq\max_{t\in\left[  1,2\right]  _{\mathbb{T}}}\left(  \left\vert
2^{-\frac{\ln t}{\ln2}}\Phi_{A}(t,t_{0})\right\vert \right)  \\
&  =\max_{t\in\left[  1,2\right]  _{\mathbb{T}}}\left(  \left\vert
2^{-\frac{\ln t}{\ln2}}tI\right\vert \right)  \\
&  =1.
\end{align*}
By Corollary \ref{cor.3} the system (\ref{ex}) has a $2$-periodic solution in
shifts. Moreover, since%
\[
E_{1}^{\ast}+r\left(  \delta_{+}^{T}(t_{0})-t_{0}\right)  \left(  \left\Vert
A\right\Vert E_{1}^{\ast}+E_{2}^{\ast}+E_{3}^{\ast}\right)  =\frac{3}{8}<1
\]
the periodic solution of the system (\ref{ex}) is unique.
\end{example}

\bigskip

\end{document}